\documentclass[a4paper,11pt]{amsart}
\usepackage[all,arc]{xy}
\usepackage[all]{xy}
\usepackage{latexsym}
\usepackage{amssymb}
\usepackage{amsmath}
\newtheorem{definition}{Definition}[section]

\newtheorem{lemma}[definition]{Lemma}
\newtheorem{prop}[definition]{Proposition}
\newtheorem{theorem}[definition]{Theorem}
\newtheorem{cor}[definition]{Corollary}
\newtheorem{conj}[definition]{Conjecture}

\newtheorem{fact}[definition]{Fact}

\newcommand*{\ms}{\models}

\newcommand*{\bsm}{\left(\begin{smallmatrix}}
\newcommand*{\esm}{\end{smallmatrix}\right)}
\newcommand*{\bp}{\begin{pmatrix}}
\newcommand*{\ep}{\end{pmatrix}}

\newcommand*{\fty}{\infty}

\newcommand*{\wg}{\wedge}
\newcommand*{\wh}{\widehat}
\newcommand*{\wt}{\widetilde}

\newcommand*{\la}{\longrightarrow}
\newcommand*{\xl}{\xleftarrow}

\newcommand*{\mB}{\mathcal{B}}

\newcommand*{\N}{\mathbb{N}}

\newcommand*{\al}{\alpha}
\newcommand*{\be}{\beta}
\newcommand*{\del}{\delta}
\newcommand*{\eps}{\varepsilon}
\newcommand*{\ga}{\gamma}

\newcommand*{\lam}{\lambda}
\newcommand*{\Lam}{\Lambda}

\renewcommand*{\phi}{\varphi}

\textwidth=16cm
\hoffset=-1cm
\textheight=23cm
\voffset=-1.5cm

\begin{document}

\footskip=30pt

\date{}

\title{Ringel's conjecture for domestic string algebras.}

\author{Gena Puninski}

\address{Department of Mechanics and Mathematics, Belarusian State University, Praspekt Nezalezhnosti 4,
Minsk 220030, Belarus}
\email{punins@mail.ru}

\author{Mike Prest}

\address{School of Mathematics, Alan Turing Building, University of Manchester, Manchester M13 9PL, UK}
\email{mprest@manchester.ac.uk}

\thanks{The first author is grateful for EPSRC support on Grant EP/K022490/1 for his one month visit to Manchester University during which
this paper was written. He also is indebted to the University for an encouraging scientific environment. Both authors
are greatly indebted to a referee for many useful suggestions.}

\subjclass[2000]{16G20 (primary), 16D50, 03C98}

\keywords{Pure injective module, superdecomposable module, domestic string algebra, Ringel's conjecture}

\begin{abstract}
We classify indecomposable pure injective modules over domestic string algebras, verifying Ringel's
conjecture on the structure of such modules.
\end{abstract}

\maketitle

\pagestyle{plain}

\section{Introduction}\label{S-Int}

In the realm of finite dimensional algebras, pure injective modules may be defined as direct summands of direct
products of finite dimensional modules. Even when the structure of finite dimensional modules is known (say, the
algebra is tame) the effect of this construction is understood in just a few cases.

For instance (see \cite{Pun04}) superdecomposable pure injective modules can occur over some tame (string,
non-domestic) algebras. If we are interested in indecomposable pure injective modules only, then a complete
classification is known for tame hereditary finite dimensional algebras (see \cite{Pre98} or \cite{Rin98}) but in
just a few other examples (see \cite{B-P}).

In 1995 Ringel \cite{Rin95} constructed examples of pure injective modules over string algebras corresponding to
some infinite strings and (in \cite{Rin00}) conjectured that these modules, the finite dimensional modules, together
with the infinite dimensional band modules (i.e. Pr\"ufer, adic and generic) is a complete list of the
indecomposable pure injective modules over domestic string algebras. Over the past 15 years there have been
persistent attempts, starting from \cite{B-P}, to settle this conjecture, most recently in the (as yet largely
unpublished) thesis of Richard Harland \cite{Har} and by the first author \cite{Pun14}, who completed the case of
1-domestic string algebras.

In this paper we will complete the proof of Ringel's conjecture by reducing it to the (already known) 1-domestic
case. It was the first author who saw how to use a portion of Harland's thesis to effect this reduction, and we will
use this opportunity to include some of Harland's arguments, though his proof of one particularly important theorem
is not included because of its highly combinatorial nature and strong dependence on the approach and details of the
whole of his text.  We will, however, give a direct proof of that result in the particular case of domestic string
algebras, leaving that to the end of the paper so as not to break the flow of the argument.  We will provide each
statement borrowed from \cite{Har} with a precise reference.

The main drawback of the current situation is that the interested reader
should go through a couple of hundred pages of journal papers, preprints and a thesis, in order to recover the whole
logic of the proof. Of course there is need for a unified text with more accessible and conceptual proofs, but for
now we are satisfied with obtaining the result.  It may be that the combinatorial nature of string algebras prevents
there being a proof which is significantly easier and shorter.

We will also prove the non-existence of superdecomposable pure injective
modules over domestic string algebras as a consequence of our main result: every pure injective module over such an
algebra contains an indecomposable direct summand.  We will postpone a further consequence, the finiteness of
Krull--Gabriel dimension of domestic string algebras to a forthcoming paper.

\section{Basics}\label{S-bas}

We will be very brief on basic definitions, relying mostly on illustrative diagrams. One can find rigorous
definitions in \cite{B-R}, with almost no diagrams, or in Schr\"oer's thesis \cite{Sch97}, with a lot of them.

In this paper $A$ will denote a finite dimensional string algebra of infinite representation type over an
algebraically closed field $K$.  The assumption that $K$ be algebraically closed is for convenience and simplicity of
arguments when treating band modules; it is almost certainly not essential for any result. A \emph{string} algebra is
a special kind of a bound quiver algebra $KQ/I$ with monomial relations, in particular there are at most two ingoing
and two outgoing arrows at each vertex of $Q$ and also, for each ingoing arrow, there is at most one nonzero
composition with an outgoing arrow, and {\it vice versa}. For instance, the Kronecker algebra

$$
\wt A_1 \hspace{1cm}
\vcenter{%
\xymatrix@R=8pt@C=20pt{%
*+={\circ}\ar@/^.5pc/[r]^{\al}\ar@/_.5pc/[r]_{\be}&*+={\circ}\\
}
}
$$\label{A1}

\vspace{2mm}

\noindent is a string algebra which is hereditary. A non-hereditary example is the Gelfand--Ponomarev algebra
$G_{2,3}$, which is the path algebra of the following quiver

$$
G_{2,3} \hspace{1cm}
\vcenter{
\xymatrix@R=16pt@C=16pt{%
*+={\circ}\ar@(ul,dl)_{\alpha}\ar@(ur,dr)^{\beta} } }
$$

\vspace{2mm}

\noindent with relations $\al^2= \be^3= \al\be= \be\al=0$.

For another example one could take the algebra $R_1$ with the same quiver but with noncommutative relations:
$\al^2= \be^2= \al\be= 0$. Finally, let $\Lam_2$ be the following string algebra:

$$
\Lam_2 \hspace{1cm}
\vcenter{
\xymatrix@R=20pt@C=20pt{%
*+={\circ}&*+={\circ}\ar@/_0.5pc/[l]_{\del}="del"\ar@/^0.5pc/[l]^{\eps}="eps"&*+={\circ}\ar[l]^{\ga}="ga"&
*+={\circ}\ar@/_0.5pc/[l]_{\al}="al"\ar@/^0.5pc/[l]^{\be}="be"\\
\ar@/_0.2pc/@{-}"del"+<6pt,-4pt>;"ga"+<-4pt,12pt>
\ar@/^0.2pc/@{-}"ga"+<8pt,4pt>;"be"+<-6pt,4pt>
}
}
$$

\vspace{2mm}

\noindent whose relations $\del\ga= \ga\be= 0$ are shown by solid curves. Note that we apply arrows from right to left
- in the path $\del\ga$ one goes first by $\ga$ and then by $\del$ - and our modules will be left modules over the
path algebra.

A \emph{letter} is an arrow (a \emph{direct} letter) or the formal inverse of an arrow (an \emph{inverse} letter).  A (finite) \emph{string} over $A$ is a walk through the quiver of $A$ (hence a finite word in direct and inverse
letters) such that there can be no cancellation (of a letter and its inverse) nor can any relation (or its inverse)
be met on the way. For instance $\al\be^{-1}$ is a string over each of the above algebras, which we will illustrate by
the following diagram:

$$
\vcenter{
\xymatrix@R=12pt@C=8pt{%
&*+={\circ}\ar[dl]_{\al}\ar[dr]^{\be}&\\
*+={\circ}&&*+={\circ}
}}
$$

\vspace{2mm}

\noindent (note that we draw direct arrows from the upper right to the lower left, and inverse arrows from the upper
left to the lower right).  By the definition of string algebra, there is at most one way to extend a string of length
$\geq 1$ to the right by a direct arrow, to the right by an inverse arrow, and similarly on the left.

In fact the above diagram represents not a string, but rather the corresponding string module (the circles correspond
to basis elements), which is finite dimensional and indecomposable.

If $u$ is a string then the corresponding string module, denoted $M(u)$, is indecomposable. According to \cite{B-R} all the
other indecomposable finite dimensional $A$-modules are \emph{band modules}, where a band is a walk which returns to
its starting point, contains both a direct and an inverse arrow, can be repeated twice and is not a proper power of another
walk. A typical example, see Figure \ref{figband}, is the (two layer) band module $M= M(C,\lam, 2)$ corresponding to
the band $C= \al\be^{-1}$ over $\widetilde{A_1}$ and $0\neq \lam\in K$. For instance $\be(z_2^2)= \lam z_1^2+ z_1^1$.

\begin{figure}[b]
$$
\vcenter{
\xymatrix@R=24pt@C=30pt{%
*+={\circ}\ar@{}+<-10pt,10pt>*{z_1^2}\ar@{.}[d]&
*+={\circ}\ar@{.}[d]\ar@/^0.9pc/[l]_{\alpha}\ar@{.}[d]\ar@{-->}@/_0.3pc/[dl]^(.3){\be}\ar@/_0.9pc/[l]_{\beta=\lambda}
\ar@{}+<10pt,10pt>*{z_2^2}\\
*+={\circ}\ar@{}+<-10pt,-10pt>*{z_1^1}&
*+={\circ}\ar@/^0.9pc/[l]^{\alpha}\ar@/_0.9pc/[l]^{\beta=\lambda}\ar@{}+<10pt,-10pt>*{z_2^1}
}
}
$$
\caption{}\label{figband}
\end{figure}

\vspace{2mm}

It is technically convenient to insist that a band should start at a point in the socle of the corresponding module:
so a \emph{band} will mean any string $C$ of the form $\al u \be^{-1}$, where $\al$ and $\be$ are different arrows
with target the same vertex of $Q$, such that $C$ is \emph{primitive}, i.e. $C\neq v^k$ for $k\geq 2$ and any string
$v$. Of course if $C$ is a band then so also is $C^{-1}= \be u^{-1}\al^{-1}$, as well as some cyclic permutations of
$C$. We assume that $A$ is of infinite representation type, so there are some
bands over $A$.

For example the algebras $\wt A_1$ and $R_1$ have essentially one band $\al\be^{-1}$, but $\Lam_2$ has two bands
$\al\be^{-1}$ and $\eps\del^{-1}$. There are infinitely many bands over $G_{2,3}$, for which
$\al\be^{-2}\al\be^{-2}\al\be^{-1}$ is one example.

One can extend the notion of a finite string to (1-sided or 2-sided) infinite string. For instance the 1-sided string
$(\al\be^{-1})^{\fty}$ is periodic, whereas the string $\be(\al\be^{-1})^{\fty}$ over $R_1$ is \emph{almost periodic},
but not periodic. Furthermore, the 2-sided string $^{\fty} (\be\al^{-1})\be (\al\be^{-1})^{\fty}$
(see Figure \ref{fig1}) over $R_1$ is \emph{biperiodic} (i.e. almost periodic on the right and on the left but not
periodic).

\begin{figure}[b]
$$
\vcenter{
\xymatrix@R=14pt@C=10pt{%
&&&&&&&*+={\circ}\ar[dl]_{\al}\ar[dr]^{\be}&&*+={\circ}\ar[dl]^{\al}\ar[dr]^{\be}&&\\
&&*+={\circ}\ar[dl]_{\be}\ar[dr]^{\al}&&*+={\circ}\ar[dl]^{\be}\ar[dr]^{\al}&&*+={\circ}\ar[dl]^{\be}&&*+={\circ}&&
*+={\circ}&\dots\\
\dots&*+={\circ}&&*+={\circ}&&*+={\bullet}&&&&&&
}
}
$$
\caption{}\label{fig1}
\end{figure}

\vspace{2mm}

A string algebra $A$ is said to be \emph{domestic} if, for every arrow $\al$, there exists at most one band over $A$
starting with $\al$, equivalently, \cite[Prop.~2]{Rin95}, there are just finitely many bands. (In fact this is a
specialization for string algebras (see \cite{Rin95}) of the general notion of domesticity for tame finite
dimensional algebras.) For example, $\al\be^{-1}$ is the unique (up to a cyclic permutation and inversion) band for
the algebras $\widetilde{A_1}$ and $R_1$, therefore we say that these algebras are 1-domestic. For $\Lam_2$ we have essentially
two bands, therefore this algebra is 2-domestic. Finally $G_{2,3}$ has infinitely many bands starting with $\al$,
hence this string algebra is not domestic.

By \cite[Prop.~2]{Rin95} a string algebra $A$ is domestic if and only if every 1-sided string over $A$ is almost
periodic. It follows that every 2-sided string over $A$ is either biperiodic or periodic.

Suppose that $u$ is an infinite string which is either 1-sided almost periodic on the right, or 2-sided almost
periodic on the right, but not totally periodic. It can be uniquely written in the form  $v l D^{\fty}$, where $D$ is
a primitive cycle and $l$ is a (direct or inverse) letter such that the string  $lD^{\fty}$ is no longer periodic; we allow $l$ to be empty, in which case $u$ is 1-sided periodic.  Following Ringel \cite{Rin95} we say that this string is \emph{expanding} if the last (meaning right-most) letter of $D$ is inverse. From the definition of string algebra it follows that $l$ is direct (or empty) and there is a repeatable ``shift" to the right of the word away from the middle - meaning the non-periodic part - which corresponds to an endomorphism of the
associated infinite-dimensional module. For instance the string shown on Figure \ref{fig1} is expanding with
$l= \be$ and $D= \al\be^{-1}$.

We say that $u$ is \emph{contracting} if the last letter of $D$ is direct, hence $l$ is an inverse letter (or empty) and there
is a corresponding shift/endomorphism towards the middle of the word. Corresponding `left' notions are defined by
considering the string $u^{-1}$. For instance the string in Figure \ref{fig1} is contracting on the left: one should
first flip it over and check this condition on the right.

To each 1-sided almost periodic (including periodic) string, and to each 2-sided biperiodic (that is, almost periodic
in each direction but not (totally) periodic) string Ringel \cite{Rin95} assigned an indecomposable pure injective
module which is a direct sum, direct product, or mixed module, depending on the shape (expanding or contracting) of
its ends. For instance, for the string $u$ shown in Figure \ref{fig1} the corresponding mixed module $M^+(u)$ is a
direct product module on the expanding end (on the right), and a direct sum module on the contracting end (on the
left).  Precisely, we take a basis element at each node of the string, form the product of the 1-dimensional
$K$-vector spaces they generate, then take the subspace of sequences which are eventually $0$ to the left; the
action of the algebra is as given by the labelling of the arrows of the string.

For each band $C$ over (any) string algebra $A$ and for a fixed $0\neq \lam\in K$ the finite dimensional band modules
$R_k= M(C,\lam, k)$ form a ray of irreducible monomorphisms, $R_1\to R_2\to \dots$, in the Auslander--Reiten quiver.
The direct limit along this ray is an indecomposable pure injective \emph{Pr\"ufer} module. Similarly, there is a
coray of irreducible epimorphisms  $R_1\xl{} R_2\xl{} \dots$ whose inverse limit is the \emph{adic} module and which
is also pure injective and indecomposable. We will refer to Pr\"ufer and adic modules, as well as the generic modules
associated to bands (one to each band, see, e.g., \cite[\S 8.1.2]{Preb2}), as \emph{infinite dimensional band modules}.

The following conjecture is due to Ringel \cite{Rin00}.

\begin{conj}\label{r-conj}
Let $M$ be an infinite dimensional indecomposable pure injective module over a domestic string algebra $A$. Then $M$
is either an infinite dimensional band module or a 1-sided or 2-sided direct sum, direct product, or mixed module
corresponding to a 1-sided almost periodic or 2-sided biperiodic string.
\end{conj}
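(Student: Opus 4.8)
\section*{Proof proposal}

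The plan is to verify Conjecture~\ref{r-conj} in two moves: first attach to an arbitrary infinite dimensional indecomposable pure injective $M$ an infinite string $u$, and then reduce the analysis of the ``periodic ends'' of $u$ to the already settled 1-domestic case, identifying $M$ with the module $M^{\pm}(u)$ built by Ringel. To set things up I would recall that the indecomposable pure injective $A$-modules are exactly the points of $\Zg_A$, that the finite dimensional string and band modules are dense there, and that, by Ringel \cite{Rin95}, every module appearing on the conjectured list --- the infinite dimensional band modules and the direct sum, direct product and mixed modules $M^{\pm}(u)$ attached to 1-sided almost periodic and 2-sided biperiodic strings --- is already known to be indecomposable pure injective. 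So the only thing to prove is that the list is exhaustive: if $M$ is infinite dimensional indecomposable pure injective and is not an infinite dimensional band module, then $M\cong M^{\pm}(u)$ for a suitable infinite string $u$. Every finite dimensional indecomposable is a string or band module by \cite{B-R}, so nothing further is needed there.

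\emph{Step 1: attaching a string.} Starting from a nonzero $a\in M$, whose pp-type $p=\mathrm{pp}^M(a)$ may be taken irreducible with hull $M$ since $M$ is indecomposable, I would run the standard pp-refinement for string algebras. Using the description of pp-formulas over a string algebra in terms of (pairs of) finite strings --- the interval-minimal pp-pairs being precisely those arising from a single direct or inverse letter --- one extends a first letter to finite strings $u_1\subset u_2\subset\cdots$, carrying along at each stage a coherent family of pp-conditions realized in $M$. Because $M$ is not finite dimensional this process does not terminate, so it produces a (1-sided or 2-sided) infinite string $u$; and because $A$ is domestic every 1-sided string over $A$ is almost periodic, so $u$ has the canonical form $v l D^{\fty}$, respectively ${}^{\fty}E l' v l D^{\fty}$, for bands $D$ (and $E$) and a finite ``middle'' part $v,l,l'$. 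This existence-and-uniqueness statement is exactly the theorem of Harland \cite{Har} that is combinatorial in general and that we will re-prove directly in the domestic case at the end of the paper.

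\emph{Step 2: reducing the ends to the 1-domestic case.} Next I would show $M\cong M^{\pm}(u)$. The behaviour of $M$ at each periodic end --- whether it is ``direct product'' (expanding end) or ``direct sum'' (contracting end), and the absence of any further direct summand --- is governed locally by the band $D$ (and $E$) occurring there. Since $A$ is domestic, $D$ is the unique band through each of its arrows, and I would pass to a 1-domestic string algebra $B$ ``generated by $D$'', i.e. a string algebra supported on a subquiver-cum-quotient of $Q$ carrying $D$, together with a functor (restriction along $B\to A$, or along an idempotent) which takes the relevant closed subset of $\Zg_A$ containing $M$ into $\Zg_B$ and sends $M$ to the corresponding almost periodic string $B$-module. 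Over $B$ the classification is known by \cite{Pun14}: the only indecomposable pure injective carrying that string is the corresponding $M^{\pm}$. Transporting this back, doing the same at the band $E$ in the biperiodic case, and gluing the two ends --- the finite middle piece $v,l,l'$ contributing only finite dimensional data --- yields $M\cong M^{\pm}(u)$, finishing the proof. As a by-product the same local analysis shows every pure injective $A$-module has an indecomposable direct summand, so there are no superdecomposable pure injectives over a domestic string algebra.

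\emph{Main obstacle.} The delicate point is the passage between $A$ and the band algebra $B$: one must be sure that the reduction functor destroys neither the indecomposability nor the pure-injectivity of $M$, and loses no direct summand --- equivalently, that the two periodic ends of $u$ genuinely decouple and that no phenomenon invisible over $B$ can occur over $A$. This comes down to showing that the pp-refinement of Step~1 is \emph{forced}: at each stage there is, consistently with $M$ being indecomposable, essentially one admissible way to extend, so that $u$ and hence $M$ are uniquely pinned down. This is the genuinely combinatorial content, and it is where the domesticity hypothesis --- finitely many bands, every 1-sided string almost periodic --- is used decisively, both to guarantee the canonical form $v l D^{\fty}$ and to make the ``width''/finiteness estimate that forces the decoupling of the ends (the same estimate that rules out superdecomposable pure injectives). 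Ringel's explicit construction of $M^{\pm}(u)$ provides the model against which $M$ is matched once this rigidity is in place.
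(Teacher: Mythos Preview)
Your proposal has the right high-level shape---attach a string, reduce to the 1-domestic case---but the execution diverges from the paper in a way that leaves genuine gaps.

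First, the paper's central technical device, the notion of a \emph{homogeneous} element, is absent from your outline. The string $w(m)=u(m)^{-1}.v(m)$ is attached to any nonzero $m\in e_SM$, but Harland's theorem (Fact~\ref{har}) applies only when $m$ is homogeneous and $w(m)$ is \emph{non-periodic}; it then identifies $M$ with $M_w$ directly. Your pp-refinement in Step~1 produces a string but gives no control over homogeneity, and the existence of a homogeneous element in every pure injective (Corollary~\ref{hom-dom}) is itself a result, proved via the non-density of the string ordering (Fact~\ref{non-dense} and Proposition~\ref{dense}).

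Second, and more seriously, you have the case split inverted. You treat the biperiodic string ${}^{\fty}E\,l'\,v\,l\,D^{\fty}$ as the hard case, proposing to localize at each end to a 1-domestic algebra and then ``glue''. In the paper this case is finished in one line by Harland's theorem once a homogeneous element with that string is found; no reduction and no gluing occur. The hard case is when \emph{every} homogeneous element of $M$ has a \emph{periodic} string ${}^{\fty}D.D^{\fty}$---precisely the case Harland's theorem excludes. You cannot dispose of this by assuming $M$ is not a band module; that is what must be proved. The paper's mechanism is: choose $D$ \emph{minimal in the bridge quiver} among bands realized in $M$, show (Lemma~\ref{D}) that every nonzero element of $\be M$ is homogeneous with string ${}^{\fty}D.D^{\fty}$, and then prove (Lemma~\ref{ga}) that any arrow $\ga$ not occurring in $D^{\pm1}$ annihilates $M$, via a detailed analysis of free realizations and maps from string and band modules. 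Only then is $M$ literally a module over the quotient by those arrows, which is 1-domestic (Lemma~\ref{1-dom}), and \cite{Pun14} applies. Your functorial passage to a ``band algebra $B$'' does not supply this annihilation, the bridge-quiver minimality is nowhere invoked, and the ``decoupling of ends'' you flag as the main obstacle is simply not a feature of the actual proof.
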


We will denote by $M_w$ the module, described above, corresponding to the string $w$ in this conjecture, and say that
$M_w$ is on Ringel's list. For 1-domestic string algebras this conjecture was verified in \cite{Pun14}.

\section{Pure injective modules}\label{S-pi}

A module $M$ over a finite dimensional algebra $A$ is said to be \emph{pure injective} (or algebraically compact) if
it is a direct summand of a direct product of finite dimensional modules. In particular, every finite dimensional
module is pure injective. From now on $A$ will denote a (usually domestic) string algebra with a fixed presentation
by a quiver with relations.

Recall (see \cite{B-R}) that for each vertex $S$ of $Q$ one can partition the strings entering $S$, including extra
strings $1_{S,\pm 1}$, into two sets $H_{\pm 1}= H_{\pm 1}(S)$. Namely we require that $1_{S,i}\in H_i$ and each
$H_i$ contains at most one direct (and inverse) arrow ending in $S$. Furthermore for each direct arrow $\al\in H_i$
and inverse arrow $\be^{-1}\in H_i$, we insist that $\be\al$ is a relation in $A$. If $C$ is a string of length
$\geq 1$ then we put it in $H_i(S)$ if the first (that is, furthest to the left) letter of $C$ is there. Observe that this partition is often
non-unique.

For instance over $R_1$ we can choose $\be, \be^{-1}\in H_1$ and $\al, \al^{-1}\in H_{-1}$ and then take a string $u$
in $H_i(S)$ if the first letter of $u$ is there. Thus for this example $H_1(S)$ consists of $1_{S,1}$ and strings that
start with either $\be$ or $\be^{-1}$.

The strings in each $H_i(S)$ are ordered in a natural way (with $\del^{-1} < 1_{S,i} < \ga$) and each of these sets
forms a chain with respect to this ordering. For instance, with the above choice for $R_1$, we get that
$\be\al^{-1}< \be $ in $H_1$ and  $\al^{-1}< 1_{S,-1}< \al$ in $H_{-1}$. When fixing a node in a diagram such as
Figure \ref{fig1} and considering related diagrams, these two sets will distinguish between the possible strings
extending to the right and to the left from that node.

One can extend the partition $H_{\pm 1}$ to include 1-sided infinite strings: we take a 1-sided string $u$ in
$\wh H_i= \wh H_i(S)$ if the first letter of $u$ is there. The ordering on $H_i(S)$ extends naturally to a linear
ordering on the set $\wh H_i(S)$ of 1-sided (finite or infinite) strings. For instance, over $R_1$, the infinite
string $(\al\be^{-1})^{\fty}$ belongs to $\wh H_{-1}$ and is larger than each finite string $(\al\be^{-1})^n$. In
general (say, over $G_{2,3}$) the structure of this ordering is quite complicated. Over domestic algebras it is not
so complicated and the following fact follows from \cite[4.10 (and 3.2)]{Sch97}.

\begin{fact}\label{non-dense}
Let $A$ be a domestic string algebra. Then each chain $\wh H_i$ contains no subchain isomorphic to the ordering
of the rationals.
\end{fact}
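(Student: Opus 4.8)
The plan is to show that the chain $\wh H_i(S)$ of one-sided (finite or infinite) strings beginning in a fixed half $H_i(S)$, over a domestic string algebra, is a \emph{scattered} linear order, i.e. it embeds no copy of $\Q$. First I would recall the basic combinatorial picture: a one-sided string beginning at $S$ can be read off letter by letter, and at each node there are at most two ways to continue (one ``up-left'' and one ``up-right'' in the diagrammatic convention), so the strings in $\wh H_i(S)$ are naturally parametrised by branches of a finitely branching tree — indeed a tree in which each node has at most two children, with the left/right choice at each node mirroring exactly the $H_{-1}/H_1$ dichotomy at the vertex reached so far. The linear order on $\wh H_i(S)$ is then the lexicographic order on the branches of this tree (with the convention that a finite branch, corresponding to a finite string that cannot be extended or whose extension leaves $H_i$, is inserted at its place as a leaf, ordered against the infinite branches through the same node by the $\del^{-1}<1_{S,i}<\ga$ rule). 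The key point to extract from domesticity is that this tree is, up to the identification of periodic tails, essentially finite: by the remarks before Fact \ref{non-dense}, every one-sided string over a domestic algebra is almost periodic, so every infinite branch is eventually periodic, and by \cite[Prop.~2]{Rin95} there are only finitely many primitive cycles (bands, and their rotations) available to serve as the periodic tail.

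The main steps are then as follows. First, I would make precise the tree description of $\wh H_i(S)$ and the claim that its order is the lexicographic order on branches; this is essentially unwinding the definitions in the paragraph preceding the statement, together with the observation that at each vertex the partition into $H_{\pm 1}$ gives the two-way branching. Second, I would invoke \cite[4.10 and 3.2]{Sch97} — which is what the paper explicitly points to — to control the set of possible periodic tails: there are only finitely many ``asymptotic behaviours'' a string in $\wh H_i(S)$ can have, indexed by the finitely many bands. Third, I would argue by a bookkeeping/induction on this finite data: the subset of $\wh H_i(S)$ consisting of strings with a fixed eventual periodic tail $D^{\fty}$ (reached after a bounded-in-shape initial segment) is order-isomorphic to a finite union of copies of well-understood scattered orders (essentially $\Z$-like or $\N$-like chains coming from how far along a repeatable shift one is, glued to a finite ``transient'' part), and a finite union of scattered linear orders is scattered. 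Since $\Q$ is not scattered but embeds densely, no copy of $\Q$ can sit inside a finite union of scattered pieces, giving the conclusion. Concretely one notes that a dense-in-itself suborder would have to meet infinitely many of the finitely many tail-types densely, impossible, or would have to sit inside a single $\Z$- or $\N$-indexed family, which is itself scattered.

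The hard part, and the reason the paper defers to Schröer, is Step 2 — pinning down exactly which combinatorial invariant of a one-sided string governs its position in the global order and proving that over a domestic algebra this invariant ranges over a scattered set. Almost periodicity of each individual string is immediate from \cite[Prop.~2]{Rin95}, but that alone does not bound the order type: one must show that the ``how do two almost periodic strings with the same tail compare'' relation is controlled by finitely much data (which band, with which rotation, entered at which of finitely many nodes, after a transient of which of finitely many shapes up to repetition), so that the whole of $\wh H_i(S)$ decomposes as a finite union of obviously scattered chains. This is precisely the content of \cite[4.10 (and 3.2)]{Sch97}, and rather than reproduce that analysis I would cite it, noting only that the finiteness of the set of bands (domesticity) is exactly what makes the relevant index set finite and hence the union scattered; in particular the rational-order obstruction is ruled out because $\Q$ is not a finite union of scattered orders.
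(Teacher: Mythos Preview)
The paper gives no proof of this fact at all: it simply records that the statement follows from \cite[4.10 (and 3.2)]{Sch97}. Your proposal likewise defers the substantive combinatorics to exactly that reference, so in that sense you take the same approach as the paper; the surrounding outline you give (view $\wh H_i(S)$ as branches of a binary tree under lexicographic order, partition by eventual periodic tail using domesticity, show each piece scattered, and use closure of scattered orders under finite unions) is a reasonable and correct scaffolding, but --- as you yourself note --- the genuine content (that the transient data before the periodic tail is controlled by finitely much information, so that each tail-class, and the class of finite strings, really is scattered) is precisely what Schr\"oer's analysis supplies and is not reproved here.
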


Suppose that $M$ is an $A$-module, $N$ is a subspace of $M$ and $\al$ is an arrow. Then $\al N$ will denote the image
of $N$ with respect to $\al$, and $\al^{-1} N$ is the preimage of $N$. If $D\in H_1(S)$ is a string of length $k$
then define $DM$ by induction on $k$, by setting $1_{S,1} M= M$ and $DM= l (D' M)$, if $D= l D'$ for a letter $l$.

Now for each $D\in H_1(S)$ we define the pp formula $(.D)(x)$. Namely if there is an arrow $\ga$ such that $D\ga^{-1}$
is a string then this formula states that $x= e_S\, x$ and $x\in D\ga^{-1}(0)$. If no such arrow exists then we
require just $x= e_S x$ and $x\in D M$. Informally we will consider this formula as asserting divisibility (in fact a bit more)
by $D$ on the right.

For instance if we choose $\be\in H_1(S)$ over $\wt A_1$ and $D= \be\al^{-1}$ then $m\in (.D)(M)$ iff  $m\in \be M$
(in this case there is no $\ga$ as above). But over $R_1$ for the same formula we obtain $m\in (.D)(M)$ iff there
exists $n\in M$ such that $m= \be n$ and $\be\al n= 0$ (in this case $\ga$ in the definition above is $\be$).

$$
\vcenter{%
\xymatrix@R=14pt@C=10pt{%
&*+={\circ}\ar[ld]_{\be}\ar[rd]^{\al}\ar@{}+<0pt,10pt>*{_n}&&\\
*+={\circ}\ar@{}+<-10pt,-6pt>*{_m}&&*+={\circ}\ar[rd]^{\be}&\\
&&&*+={\circ}\ar@{}+<8pt,-2pt>*{_0}
}}
$$

\vspace{2mm}

Observe that, if $D, D'\in H_1(S)$, then $D\leq D'$ if and only if $(.D')$ implies $(.D)$ (hence the ordering on
pp formulas inverts the ordering on strings).

The left-handed formulas $(C^{-1}.)$ for $C\in H_{-1}(S)$ are defined similarly. Finally let $(C^{-1}.D)$ denote the
conjunction of $(C^{-1}.)$ and $(.D)$.

Suppose that $M$ is an $A$-module and $m$ is a nonzero element in $e_S M$ for some primitive idempotent $e_S$. There
is standard way (see \cite{P-P04} or \cite[Sect. 5.8]{Har}) to assign to $m$ a (finite, 1-sided, or 2-sided) string
$w= u^{-1}.v$, where $u\in \wh H_{-1}(S)$ and $v\in\wh H_1(S)$.

Namely define $v(m)$, the \emph{right handed string}, as the supremum of strings $D\in H_1(S)$ such that the formula
$(.D)(x)$ holds for $m$ in $M$. For instance, if $M= M(u)$ is a finite dimensional string module, $m$ is its
leftmost basis element and $u\in H_1(S)$ then $v(m)= u$. Furthermore if $M= M^+$ is as in Figure \ref{fig1} and
$m$ is the marked element from the socle of $M$, then it is easily checked that $v(m)= \be (\al\be^{-1})^{\fty}$.

Note that if $v= v(m)$ is infinite then one can choose an ascending sequence of substrings $D_1< D_2< \dots$ of $v$
such that $v= D_k v_k$ where $D_k$ ends with an inverse arrow, $v_k$ starts with a direct arrow and $v$ is the
supremum of the $D_k$. Furthermore if $M$ is pure injective, we can always divide $m$ in $M$ by $v(m)$, meaning
that there is a sequence $m_k$ of elements in the socle of $M$ which together
witness each of the conditions $m\in (.D_k)(M)$.

The \emph{left handed string of $m$}, $u(m)$, is defined similarly. For instance in our running example we obtain
$u= (\al\be^{-1})^{\fty}$, or rather $u^{-1}= {}^{\fty} (\be\al^{-1})$ - which is the way it is shown on the diagram.

Finally we will set $w(m)= u^{-1}.v$. For instance (see \cite[L. 111]{Har}) if $M= M_w$ is from Ringel's list and
$m$ is an element from a standard basis placed between the strings $u^{-1}$ and $v$ (so $w= u^{-1}v$), then
$u= u(m)$ and $v= v(m)$. Note also that to every element in a standard basis of a finite dimensional band module we
assign in this way a 2-sided periodic string $^{\fty}D. D^{\fty}$ for some primitive cycle $D$ (we have included
among the string, rather than band, modules those `band' modules, such as that with parameter $\lambda =0$ in
Figure \ref{figband}, which can be described in both ways).

Let us make the following trivial, but important, remark that will be used frequently. Suppose that $m= \al n$
for some nonzero $m\in e_S M$, $n\in e_T M$ and $\al^{-1}\in H_{-1}(T)$. Then $u(n)= \al^{-1} u(m)$ and
$v(m)\geq \al v(n)$. Namely if $u(n)> \al^{-1} u(m)$ then there is $\ga\in H_{-1}(T)$ such that $n$ is divisible
by $\ga$, hence $\al\ga= 0$ implies $m=0$, a contradiction.

$$
\vcenter{%
\xymatrix@C=10pt@R=10pt{%
*+={\circ}\ar[rd]^{\ga}&\\
&*+={\circ}\ar[ld]^{\al}\ar@{}+<10pt,0pt>*{_n}\\
*+={\circ}\ar@{}+<-10pt,-4pt>*{_m}&
}}
$$

\vspace{2mm}

Suppose that $M$ is a pure injective module pointed at a (nonzero) element $m\in e_S M$ and let $w(m)= u^{-1}.v$
(a \emph{pointed module} is a module with a specified element or tuple). Say that $m$ is \emph{homogeneous} if there is no
$x\in e_S M$ such that the left string of $x$ is greater than $u$ and the right string of $m-x$ is greater than $v$.
Note (by Lemma \ref{triang} below) that if an element $m$ could be written in this way then the right strings of $m$
and $x$ would be equal, as would be the left strings of $m-x$ and $m$.

$$
\vcenter{
\xymatrix@R=20pt@C=14pt{%
&*+={\circ}\ar@{--}[dl]\ar@{--}[dr]\ar@{}+<0pt,10pt>*{_m}&\\
*+={\circ}\ar@{}+<-1pt,-10pt>*{_x}\ar@{}+<-30pt,6pt>*{_{u(x)> u(m)}}&&
*+={\circ}\ar@{}+<0pt,-10pt>*{_{m-x}}\ar@{}+<38pt,6pt>*{_{v(m-x)> v(m)}}
}
}
$$

\vspace{2mm}

For instance it follows from \cite[L. 156]{Har} that every element in the canonical basis of a module $M= M_w$ from
Ringel's list is homogeneous. Note that Harland uses `fundamental' instead of `homogeneous' (and the terminology in
\cite{BarPre} was `maximal').

We will discuss this notion in detail in the next section. The following result is crucial for our considerations.
Including the proof would make the paper considerably longer so we have given a different but direct proof of this
for the case of domestic string algebras. We have placed that at the end of the paper so as not to break the flow
towards our main result.

\begin{fact}(see \cite[Thm. 51, p. 268]{Har} and its proof)\label{har}
Suppose that $A$ is an arbitrary string algebra and let $w= u^{-1}.v$ be either a 1-sided string or a 2-sided
\textbf{non-periodic} string. Then there exists a unique indecomposable pure injective module $N_w$ containing a
homogeneous element $m$ whose string $w(m)$ equals $u^{-1}.v$.

Furthermore if $M$ is an arbitrary pure injective module with a homogeneous element $m$ with $w(m) = u^{-1}.v$, then
$N_w$ is a direct summand of $M$.
\end{fact}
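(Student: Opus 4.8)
The plan is to build $N_w$ by a direct construction as a subquotient of an ultrapower (or, more elementarily, as the pure-injective hull of an explicitly described pointed module) and then to establish uniqueness and the summand property using the fact that the pp-type of a homogeneous element is, in a suitable sense, determined by the string $w$. First I would set up the existence part. Given the string $w = u^{-1}.v$, write $v$ as a supremum of an ascending chain $D_1 < D_2 < \cdots$ of finite substrings ending in inverse arrows, and similarly $u$ as a supremum of finite substrings $C_1 < C_2 < \cdots$; this is possible exactly when $w$ is non-periodic (in the 2-sided case periodicity would make one side stabilize, or force a band module, which is the excluded case). Consider the pp-type $p$ generated by the formulas $(C_k^{-1}.D_k)(x)$ together with the negations of all $(.D)(x)$ for $D \not\leq v$ in $\wh H_1(S)$ and all $(C^{-1}.)(x)$ for $C \not\leq u$ in $\wh H_{-1}(S)$; this is a consistent pp-type because it is realized, for instance, in the string module $M(w)$ itself (or in $M_w$ when that is on Ringel's list, but we must not assume $w$ is of that form). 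Let $N_w$ be the pure-injective hull of the pointed module $(M(w), m)$ where $m$ is the standard basis element sitting at the junction of $u^{-1}$ and $v$. By construction $m$ has $w(m) = u^{-1}.v$, and one checks $m$ is homogeneous in $N_w$: if $m = x + (m-x)$ with $u(x) > u$ and $v(m-x) > v$, then using Lemma~\ref{triang} (the triangle-type lemma referenced in the excerpt) the right string of $x$ equals $v$ and the left string of $m-x$ equals $u$, and one derives a contradiction with the maximality built into $p$ — essentially because $x$ would then realize a strictly larger left string while still satisfying $(.D_k)$ for all $k$, which forces the junction element of a strictly larger string, impossible by the non-density Fact~\ref{non-dense} controlling how the chains $\wh H_i$ can behave.

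For indecomposability of $N_w$, I would argue that $\End(N_w)$ is local by showing the pp-type $p$ of $m$ is \emph{irreducible} (not the sum of two strictly larger pp-types in the lattice of pp-types): a homogeneous element cannot be split as $m = m_1 + m_2$ with $\mathrm{ppt}(m_i)$ strictly above $\mathrm{ppt}(m)$, and for pp-types of this "two chains meeting at a point" shape over a string algebra the only way the hull could decompose is via exactly such a splitting. Concretely, any idempotent $e \in \End(N_w)$ gives $m = em + (1-e)m$; comparing left and right strings of the two summands and invoking homogeneity of $m$ forces one of $em$, $(1-e)m$ to have both strings no larger, hence (by the defining property of the hull as the injective envelope over the pp-type) to lie in a proper direct summand not containing $m$ in its support — so $e$ or $1-e$ kills $m$, and then kills $N_w$ since $N_w$ is the hull of $m$. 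This is where the combinatorics of string algebras enters, and I expect it to be the most delicate point: one needs that over a string algebra the pp-type attached to a homogeneous string element is genuinely meet-irreducible, which is really the content that Harland proves combinatorially and which, in the domestic case, the excerpt promises to reprove directly at the end of the paper.

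For uniqueness and the summand property — which I would handle together — let $M$ be any pure-injective with a homogeneous element $m'$ and $w(m') = u^{-1}.v$. The pp-type of $m'$ contains $p$ (it satisfies all the positive formulas $(C_k^{-1}.D_k)$ since these are witnessed by divisibility in the socle, using pure-injectivity as in the remark preceding Fact~\ref{har}), and homogeneity of $m'$ forces it to contain the negative part of $p$ as well: if $m'$ satisfied some $(.D)$ with $D \not\leq v$, one could peel off a summand increasing the right string while a complementary piece keeps the left string, contradicting homogeneity (again via Lemma~\ref{triang}). Hence $\mathrm{ppt}_M(m') = p = \mathrm{ppt}_{N_w}(m)$. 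By the standard correspondence between indecomposable pure-injectives and (irreducible) pp-types — equivalently, by the universal property of the pure-injective hull — the map $m \mapsto m'$ extends to a pure embedding $N_w \hookrightarrow M$, which splits because $N_w$ is pure-injective; this gives $N_w \mid M$. Applying this with $M = N_w'$ another module satisfying the hypotheses yields mutual split embeddings between indecomposables, hence $N_w \cong N_w'$, which is the uniqueness. The only genuine obstacle, as noted, is the irreducibility of $p$ establishing that $N_w$ is indecomposable; everything else is a formal consequence of pure-injective hull theory plus the bookkeeping lemmas already in the excerpt.
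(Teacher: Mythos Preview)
Your argument has a genuine gap at the step where you conclude $\mathrm{ppt}_M(m') = p$. You verify that $m'$ satisfies every formula $(C_k^{-1}.D_k)$ and fails every $(.D)$ with $D \not\leq v$ (and symmetrically on the left), but a pp-type contains \emph{arbitrary} pp formulas, not only those of this restricted shape or their finite sums. Two elements can agree on all formulas $(C^{-1}.D)$ and on all sums of such formulas yet have different pp-types; nothing you have written rules this out. Consequently the identification $\mathrm{ppt}_M(m') = \mathrm{ppt}_{N_w}(m)$, on which both uniqueness and the summand property rest, is not justified --- and that identification is precisely the content of the theorem, so the argument is circular at its core. Your indecomposability argument has a related weakness: from $m = em + (1-e)m$ for an idempotent $e$, homogeneity only forbids a splitting with \emph{strictly larger} strings on each side, which does not by itself force one summand to vanish (both pieces could have string equal to $w$).

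The paper's proof (Theorem~\ref{unique}) is organized to avoid exactly this trap. Rather than attempting to pin down the full pp-type, it invokes Ziegler's lemma: to determine an indecomposable pure-injective it suffices to determine the restriction of $p$ to a single interval $[\psi,\phi]$ with $\phi\in p^+$ and $\psi\in p^-$. The paper takes $\phi=(C^{-1}.D)$ and $\psi=(E'.D)+(C^{-1}.F')$ explicitly and then analyzes an \emph{arbitrary} formula $\chi$ in that interval by passing to an indecomposable free realization $(L,l)$ of $\chi$, so that $L$ is a string or band module, and using the classification of graph maps $M(C^{-1}.D)\to L$. The combinatorics of these maps (together with the domestic hypothesis via Fact~\ref{band}) force $\chi$ either to be implied by some $(C_1^{-1}.D_1)\in p^+$ or to imply some sum $(E_2^{-1}.D)+(C^{-1}.F_2)\in p^-$, independently of $M$. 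This reduction via free realizations is the missing ingredient in your outline: it is how one gets control over a general pp formula rather than only those of the special shapes you consider.
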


Observe that over a domestic string algebra every string $w$ is almost periodic (in one or both directions), hence
$N_w$ is isomorphic to $M_w$.

\section{Homogeneous elements}\label{S-hom}

In this section we will develop the machinery of homogeneous elements, closely following Harland's thesis.

We start with a straightforward lemma, a `triangle inequality'.

\begin{lemma}\label{triang}(see \cite[L. 155]{Har})
Suppose that $m_1, m_2\in e_S M$ have right strings $v_1$ and $v_2$. Then the element $m= m_1+ m_2$ has
right string greater than or equal to $\min(v_1,v_2)$. Furthermore if $v_1\neq v_2$ then the right string
of $m$ equals $\min(v_1,v_2)$.
\end{lemma}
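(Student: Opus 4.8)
The plan is to unwind the definition of the right-handed string $v(m)$ as a supremum of pp-formulas $(.D)$ with $D\in H_1(S)$, and to use the fact, already noted in the text, that the ordering on pp-formulas inverts the ordering on strings: $D\le D'$ in $H_1(S)$ iff $(.D')$ implies $(.D)$. So $v(m)\ge D$ means exactly that $(.D)(x)$ holds for $m$ in $M$. Since each $(.D)$ is a pp-formula, the set $(.D)(M)$ is a subgroup of $e_SM$ (indeed a subspace), so if $(.D)$ holds for both $m_1$ and $m_2$ it holds for $m_1+m_2$.

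First I would fix $D\in H_1(S)$ with $D\le\min(v_1,v_2)$, i.e. $D\le v_1$ and $D\le v_2$. By the remarks above this gives $m_1,m_2\in(.D)(M)$, hence $m=m_1+m_2\in(.D)(M)$, hence $v(m)\ge D$. Taking the supremum over all such $D$ yields $v(m)\ge\min(v_1,v_2)$, which is the first assertion. (One should be slightly careful that the supremum defining $v(m)$ is taken in $\wh H_1(S)$, but since $v(m)\ge D$ for every $D\le\min(v_1,v_2)$ and $\min(v_1,v_2)$ is itself the supremum of the finite strings below it — or equals one of them if it is finite — monotonicity of the supremum gives the inequality with $\min(v_1,v_2)$ directly.)

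For the second assertion, assume without loss of generality $v_1<v_2$, and suppose for contradiction that $v(m)>\min(v_1,v_2)=v_1$. Then there is a finite string $D\in H_1(S)$ with $v_1<D\le v(m)$ and $D\le v_2$; in particular $(.D)$ holds for $m$ and for $m_2$, hence for $m_1=m-m_2$, giving $v_1=v(m_1)\ge D>v_1$, a contradiction. (If no such finite $D$ exists strictly between $v_1$ and $v(m)$ one instead picks $D$ with $v_1<D\le\min(v(m),v_2)$ using that $v_1$ is not a supremum from above of strings that are all $\le v_2$ — concretely, $v(m)>v_1$ means some finite $D>v_1$ has $(.D)$ holding for $m$, and since $D$ lies on the string $v_1$'s ``side'' one checks $D\le v_2$ using that $v_1$ and $v_2$ agree up to the point where $v_1$ stops being a prefix, which is past $D$.) I do not expect this to be the real difficulty.

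The step I expect to require the most care is the bookkeeping in the chain $\wh H_1(S)$: one must be sure that ``$v(m)>v_1$'' really does produce a \emph{finite} $D\in H_1(S)$ with $v_1<D$ and $D$ still comparable to (and $\le$) $v_2$, rather than merely $D$ incomparable to $v_2$ further along a different branch. This is exactly where one uses that $H_1(S)$ is linearly ordered (it is a chain, as recalled in the text), so $D\le v_2$ or $D\ge v_2$; in the latter case $v(m)\ge D\ge v_2>v_1$, and then applying the already-proved first assertion to $m_1=m+(-m_2)$ with right strings $v(m)$ and $v_2$ gives $v_1=v(m_1)\ge\min(v(m),v_2)=v_2$, again a contradiction. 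So linearity of the chain closes both cases, and the argument is complete.
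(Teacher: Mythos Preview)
Your argument is correct and is essentially the same as the paper's. For the first assertion the paper argues by induction on the length of $C$ that $m_1+m_2$ is divisible by $C$; you invoke instead the general fact that a pp-definable set is a subgroup, which is equivalent and arguably cleaner. For the second assertion the paper simply writes $m_2=m-m_1$ (assuming $v_1>v_2$) and applies the first part to get $v_2=v(m_2)\ge\min(v(m),v_1)>v_2$; you reach the same one-line argument at the end of your last paragraph, but only after an unnecessary detour through choosing a witnessing finite $D$ and worrying about its comparability with $v_2$. You can delete the middle paragraph and the case analysis entirely: once $v_1<v_2$ and $v(m)>v_1$, the first assertion applied to $m_1=m-m_2$ gives the contradiction directly, with no need to produce any particular $D$.
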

\begin{proof}
Suppose that $C$ is a finite string such that $C\leq v_1, v_2$, therefore $m_1$ and $m_2$ are divisible by $C$ on
the right. It follows easily by induction on the length of $C$ that $m= m_1+ m_2$ is divisible by $C$ on the right,
hence $C\leq v(m)$, and the first claim follows.

For the second claim, by symmetry we may assume that $v_1> v_2$. Suppose that $v=v(m) > \min(v_1, v_2)= v_2$ hence
$\min(v, v_1)> v_2$. Writing $m_2= m - m_1$, by the first statement of the lemma we derive a contradiction.
\end{proof}

The following proposition says that when multiplying or dividing a homogeneous element by an arrow, we can choose the
resulting element to be homogeneous.

\begin{lemma}\label{div}(see \cite[L. 158]{Har})
Suppose that $m$ is a homogeneous element in a pure injective module $M$ with $w(m)= u^{-1}.v$, where $v= lv'$ for a
direct or inverse arrow $l$. Choose an element $n\in e_T M$ such that $m= \al n$ and $v(n)= v'$ if $l=\al$ is direct,
and set $n= \be m$ if $l= \be^{-1}$ is inverse.  Assume without loss of generality that $v'$ is in $\wh H_1(T)$ and hence $l^{-1}u\in \wh H_{-1}(T)$.  Then $n$ is homogeneous, with $w(n)= u^{-1}l.v'$.

More generally, if $v(m)=Cv''$ and, working along $C$, we choose a sequence of elements as above, obtaining $m=Cn$,
then $n$ will be homogeneous and, with a suitable choice of $H_{\pm 1}$, $w(n)$ is equal to $u^{-1}C.v''$.
\end{lemma}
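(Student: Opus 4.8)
The plan is to prove the single-arrow case first and then obtain the general statement by an easy induction on the length of $C$, using the single-arrow case as the inductive step.

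\medskip

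For the single-arrow case, split into the two possibilities for the letter $l$. Suppose first that $l=\al$ is direct, so $v=\al v'$ and we have chosen $n\in e_TM$ with $m=\al n$ and $v(n)=v'$; this is possible because $m$ is divisible on the right by the finite initial substrings of $v$, and pure injectivity lets us realise the supremum, choosing $n$ so that $v(n)$ is exactly $v'$ rather than something strictly larger. By the ``trivial but important remark'' just before Lemma \ref{triang}, since $\al^{-1}\in H_{-1}(T)$ we get $u(n)=\al^{-1}u(m)=l^{-1}u$, so $w(n)=u^{-1}l.v'$ as claimed; it remains to check that $n$ is homogeneous. Suppose it were not: then there is $x\in e_TM$ with $u(x)>l^{-1}u$ and $v(n-x)>v'$. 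Multiply through by $\al$: set $y=\al x\in e_SM$. The remark gives $u(y)\ge \al\,u(x)$, and since $\al^{-1}l^{-1}u$ would be met by a relation unless we pass through — here one must argue, using that $x$ is divisible on the left by more than $l^{-1}u$, that $\al x\ne 0$ and $u(\al x)=\al\,u(x)>u$. Similarly $m-y=\al(n-x)$, and $v(m-y)\ge \al\,v(n-x)>\al v'=v$ (strictness because $n-x$ is divisible on the right by a longer string than $v'$, and prepending $\al$ is order-preserving on the relevant chain). This contradicts homogeneity of $m$. The case $l=\be^{-1}$ inverse is dual: now $n=\be m$, and one checks $v(n)=v'$ and $u(n)=l^{-1}u=\be u$ directly from the definitions of the pp-formulas $(.D)$ and $(C^{-1}.)$, then runs the same contradiction argument, dividing rather than multiplying by $\be$, to see $n$ is homogeneous.

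\medskip

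For the general statement, induct on the length of $C$. If $C=lC''$ with $l$ a single letter, first apply the single-arrow case to pass from $m$ (with right string $v(m)=l(C''v'')$) to a homogeneous $n_1\in e_{T_1}M$ with $m=ln_1$ and $w(n_1)=u^{-1}l.(C''v'')$; here the phrase ``with a suitable choice of $H_{\pm1}$'' in the statement accounts for the fact that whether $C''v''$ lands in $\wh H_1(T_1)$ or $\wh H_{-1}(T_1)$ depends on a partition choice at $T_1$, and we fix it so that the hypothesis of the single-arrow case is met. Then apply the inductive hypothesis to $n_1$ and the shorter string $C''$, obtaining homogeneous $n$ with $n_1=C''n$ and $w(n)=(u^{-1}l)C''.v''=u^{-1}C.v''$, and $m=ln_1=lC''n=Cn$.

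\medskip

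The main obstacle is the strictness of the inequalities in the contradiction argument for the single-arrow case: I must show that prepending an arrow to a string strictly increases it in the chain $\wh H_i$ whenever the original strings were already strictly ordered, and — more delicately — that the auxiliary element $y=\al x$ (respectively the divided element in the inverse case) is genuinely nonzero with left string $\al\,u(x)$, not killed by a relation. This is where one uses the defining properties of a string algebra (at most one nonzero composition through each arrow, the compatibility of the $H_{\pm1}$ partition with relations) together with the observation, recorded after Lemma \ref{div}'s predecessor, that $D\le D'$ iff $(.D')\Rightarrow(.D)$, so that order relations among strings translate into implications among pp-formulas that are preserved under the operations ``multiply by $\al$'' and ``take $\al$-preimage''. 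Everything else is bookkeeping with the definitions of $u(m)$, $v(m)$ and the ``trivial but important remark''.
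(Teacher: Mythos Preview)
Your overall architecture matches the paper's: split on whether $l$ is direct or inverse, derive a contradiction to homogeneity of $m$, then induct on the length of $C$. Case $l=\al$ direct is essentially the paper's argument, though your treatment of the nonzeroness of $y=\al x$ is slightly off. You try to extract $\al x\ne 0$ from $u(x)>\al^{-1}u$, but the clean route (and the paper's) is the other side: since $v(n-x)>v'$ we get $v(m-y)\ge \al\,v(n-x)>\al v'=v(m)$, hence $m-y\ne m$, hence $y\ne 0$; then the ``trivial remark'' applies to $y=\al x$ and gives $u(x)=\al^{-1}u(y)$, so $u(y)>u$. You already have this inequality written down, so this is only a matter of reordering.

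The real gap is Case $l=\be^{-1}$. You assert that $u(n)=\be u$ follows ``directly from the definitions'', but only the inequality $u(n)\ge \be u$ is immediate. Equality genuinely uses homogeneity of $m$: if $u(n)>\be u$ then $n$ can be divided by $\be$ with a witness $m'$ satisfying $u(m')>u$; since $\be(m-m')=0$ one gets $v(m-m')>v(m)$, and the decomposition $m=m'+(m-m')$ contradicts homogeneity of $m$. This step is not a formality and cannot be skipped. Moreover, the homogeneity of $n$ in this case is not the ``dual'' of the direct case: given $n=n_1+n_2$ with $u(n_1)>\be u$ and $v(n_2)>v'$, one must \emph{divide} $n_1$ by $\be$ to produce $m_1$ with $u(m_1)>u$, then use homogeneity of $m$ on $m=m_1+(m-m_1)$ to force $v(m-m_1)\le v$, and finally compute $n_2=\be(m-m_1)$ to obtain $v(n_2)\le v'$, a contradiction. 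This argument does not parallel the direct case (where one multiplies both pieces by $\al$); the asymmetry is that in the direct case $n$ sits above $m$, while here $n=\be m$ sits below, so the two halves of the decomposition are handled differently. Your sketch ``dividing rather than multiplying by $\be$'' gestures at part of this but misses the structure.
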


By symmetry all this holds also for left handed strings.

\begin{proof}
Suppose first that $l=\al$ is a direct arrow, hence $m= \al n$ and $v(n)= v'$.

As we have already mentioned $w(n)= u^{-1}\al. v'$. It remains to show that $n$ is homogeneous.

Suppose for a contradiction that $n= n_1+ n_2$ such that $u(n_1)> u(n)$ and $v(n_2)> v(n)$. Set $m_1= \al n_1$ and
$m_2= \al n_2$, so $m= m_1+ m_2$.

$$
\vcenter{
\xymatrix@R=24pt@C=18pt{%
&&&*+={\circ}\ar@{}+<10pt,4pt>*{_n}\ar@{--}[dll]\ar@{--}[d]\ar[dl]^{\al}\\
&*+={\circ}\ar@{}+<-10pt,4pt>*{_{n_1}}\ar[dl]_{\al}&
*+={\circ}\ar@{}+<-4pt,-10pt>*{_m}\ar@{--}[dll]\ar@{--}[d]&
*+={\circ}\ar@{}+<12pt,0pt>*{_{n_2}}\ar[dl]^{\al}\\
*+={\circ}\ar@{}+<0pt,-10pt>*{_{m_1}}&&*+={\circ}\ar@{}+<0pt,-10pt>*{_{m_2}}&
}}
$$

\vspace{2mm}

Because $v(n_2)> v(n)$ it follows that $v(m_2)\geq \al v(n_2)> \al v(n)= v(m)$, in particular $m_1\neq 0$. Consequently
$\al^{-1} u(m_1) = u(n_1) > u(n)=\al^{-1} u$, therefore $u(m_1)> u= u(m)$. But $m= m_1 + m_2$ and $m$ is
homogeneous, a contradiction.

Now consider the case when $l= \be^{-1}$ is inverse, so $\be m= n$; note that $n\neq 0$. Then, by definition, $v(n)= v'$ and
$u(n)\geq \be u$. Suppose, by way of contradiction, that $u(n)> \be u$. Therefore we can divide $n$ by $\be$, finding
an element $m'$ such that $\be m'= n$ and $u(m')> u$.

$$
\vcenter{
\xymatrix@R=10pt@C=10pt{%
&*+={\circ}\ar[rdd]^{\be}\ar@{}+<-5pt,8pt>*{_{m'}}&\\
*+={\circ}\ar[rrd]_{\be}\ar@{}+<-10pt,0pt>*{_m}&&\\
&&*+={\circ}\ar@{}+<8pt,-2pt>*{_n}
}}
$$

\vspace{2mm}

Then $\be(m-m')=0$ yields $v(m-m')> v(m)$. It follows that $m= m'+ (m-m')$ with $u(m')> u(m)$ and $v(m-m')> v(m)$, a
contradiction to homogeneity of $m$.  So $u(n)=\be u$.

It remains to show that $n$ is homogeneous. Otherwise $n= n_1+ n_2$ such that $u(n_1)> u(n)$ and $v(n_2)> v(n)$. Since
$u(n_1)> u(n)= \be u$, $n_1$ is divisible by $\be$: there exists $m_1$ such that $\be m_1= n_1$ and $u(m_1)> u$.

$$
\vcenter{
\xymatrix@R=14pt@C=14pt{%
*+={\circ}\ar@{}+<0pt,10pt>*{_{m_1}}\ar[rd]_{\be}&&
*+={\circ}\ar@{}+<0pt,10pt>*{_n}\ar@{--}[ld]\ar@{--}[rd]&\\
&*+={\circ}\ar@{}+<0pt,-10pt>*{_{n_1}}&&*+={\circ}\ar@{}+<0pt,-10pt>*{_{n_2}}
}}
$$

\vspace{2mm}

Because $m$ is homogeneous, from the decomposition $m= m_1 + (m-m_1)$ it follows that $v(m-m_1)\leq v= \be^{-1}v'$,
therefore $v(\be (m-m_1))\leq v'$. But $\be(m-m_1)= n_2$ and $v(n_2)> v(n)= v'$, a contradiction.

The statement for general finite strings $C$ follows by induction.
\end{proof}

We say that a pure injective module $M$ is \emph{1-sided}, if it contains a nonzero element $m\in e_S M$ whose
string $w(m)$ is 1-sided (or finite), and $M$ is \emph{2-sided} otherwise. For instance it can be checked that the module $M_w$
corresponding to a string $w$ on Ringel's list is 1-sided if and only if $w$ is 1-sided. Furthermore each finite
dimensional band module is 2-sided.

One-sided indecomposable pure injective modules over (any!) string algebra $A$ were classified in \cite{P-P04}: they
one-to-one correspond to the 1-sided strings. In particular, if $A$ is a domestic string algebra then each 1-sided
module is on Ringel's list. Thus in classifying indecomposable pure injective modules it suffices to consider the
2-sided case. Furthermore, by Fact \ref{har}, if $M$ contains a homogeneous element whose string is non-periodic,
then we know the structure of $M$, in particular it is on Ringel's list.

From the next proposition it follows that, in the domestic case, every pure injective module contains a homogeneous
element. That will leave us with the periodic 2-sided case to deal with in the next section.

\begin{prop}\label{dense}(see \cite[L. 168]{Har})
Let $A$ be an arbitrary string algebra and let $M$ be a 2-sided pure injective $A$-module containing no homogeneous
elements. Take any $0\neq m\in e_S M$ and set $w(m)= u^{-1}.v$. Then for any string $u'> u$ there exists $x\in e_S M$
such that $u'> u(x)> u$ and $v(m-x)> v$.
\end{prop}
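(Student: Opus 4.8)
The plan is to exploit the hypothesis that $M$ has no homogeneous elements in order to manufacture, for any prescribed $u' > u$, a suitable decomposition $m = x + (m-x)$. The first step is to unpack the negation of homogeneity for $m$ itself: since $m$ is not homogeneous, there is some $x_0 \in e_S M$ with $u(x_0) > u(m) = u$ and $v(m - x_0) > v(m) = v$. By Lemma~\ref{triang} (the triangle inequality) applied on both sides, this forces $v(x_0) = v$ and $u(m - x_0) = u$; in particular the decomposition is ``tight'' on the appropriate ends. This already gives \emph{some} $x$ with $u(x) > u$ and $v(m-x) > v$, but with no control over how large $u(x)$ is. The real content of the proposition is the upper bound $u(x) < u'$, i.e. that we can find such a witness whose left string is not too large.

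The second step is to iterate. Suppose we have found $x$ with $u(x) > u$ and $v(m-x) > v$ but $u(x) \geq u'$ (otherwise we are done). Now look at the element $x$ itself: it too cannot be homogeneous, so we can split $x = x_1 + x_2$ with $u(x_1) > u(x)$ and $v(x_2) > v(x) = v$ (using Lemma~\ref{triang} to see $v(x_2) = v$ and... actually $v(x_1)$ may jump). The point is to replace $x$ by $x_2$: we have $v((m - x) + x_1) \geq \min(v(m-x), v(x_1))$, and since both $v(m-x) > v$ and $v(x_1) > v(x) = v$ we get $v(m - x_2) = v(m - x + x_1) > v$. So $x_2$ is again a witness, and we have \emph{decreased} $u(x_2)$? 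No --- here is the subtlety: splitting $x$ gives a piece $x_1$ with \emph{larger} left string and a piece $x_2$ with left string only $\geq u(x)$, not necessarily smaller. To actually bring the left string down below $u'$ I expect one must instead split the \emph{complementary} element $m - x$ and use its left string $u(m-x) = u$: since $m - x$ is not homogeneous, write $m - x = y_1 + y_2$ with $u(y_1) > u(m-x) = u$ and $v(y_2) > v(m-x) > v$. Then $x' := x + y_1$ satisfies $u(x') \geq \min(u(x), u(y_1))$; we need to understand when this min drops. The right comparison to feed the iteration is the chain structure of $\wh H_{-1}(S)$: the left strings $u(x)$ we produce all lie strictly between $u$ and (in the limit) should be pushed down. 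Because, by Fact~\ref{non-dense}, the chain $\wh H_{-1}(S)$ contains no copy of $\mathbb{Q}$, a strictly decreasing sequence of such left strings cannot stay bounded below by $u'$ forever --- so the iteration, suitably set up, must terminate with $u < u(x) < u'$.

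The third step is to assemble this into a clean argument rather than a naive induction. I would argue by contradiction: suppose no such $x$ exists, i.e. every $x$ with $v(m-x) > v$ has $u(x) \geq u'$ (or $u(x) = u$, but that case is excluded by Lemma~\ref{triang} as above, since $v(x)=v < v(m-x)$ would force... actually $u(x) > u$ is automatic once $v(x) = v$ and the pair is tight). Consider the set $\Sigma = \{\, u(x) : x \in e_S M,\ v(m - x) > v \,\}$, a nonempty (it contains some element $\geq u'$) subset of the chain segment $(u, \to)$ of $\wh H_{-1}(S)$. I claim $\Sigma$ has no least element: given $x$ realizing $u(x) = $ some element of $\Sigma$, non-homogeneity of $x$ (applied on the left) produces $x = x_1 + x_2$ with $u(x_2) < u(x_1)$, $u(x_1) > u(x)$, and then $v(m - x_2) = v(m - x + x_1) > v$ by Lemma~\ref{triang}, while $v(x_2) = v$ gives $u(x_2) > u$ --- and crucially one shows $u(x_2) < u(x)$ using the ``trivial but important remark'' about multiplication/division by arrows together with Lemma~\ref{div}, comparing along the common initial segment of $u(x_1)$ and $u(x_2)$. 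Hence $\Sigma$ is a strictly decreasing chain with no least element, bounded below by $u$ (and, by our contradiction hypothesis, bounded below by $u'$); realizing its infimum in the pure injective $M$ (divide through by the common left substrings, which $M$ being pure injective permits, as noted in the discussion of dividing by $v(m)$) gives an element $x_\infty$ with $u(x_\infty) \le u'$ and $v(m - x_\infty) > v$, contradicting the hypothesis once one checks $u(x_\infty) > u$. The main obstacle, and the place where I expect the genuine work to lie, is precisely this last point: showing that the strictly decreasing chain of left strings actually has a \emph{realized} infimum in $M$ (this is where pure injectivity is essential and where one must be careful that passing to the limit does not push $v(m - x)$ back down to $v$), and showing that that infimum still lies strictly above $u$ rather than collapsing to $u$; controlling the right-hand side throughout the limiting process via Lemma~\ref{triang} is the delicate bookkeeping.
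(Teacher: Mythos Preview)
Your argument has a genuine gap at the key step. When you split a non-homogeneous $x$ as $x = x_1 + x_2$ with $u(x_1) > u(x)$ and $v(x_2) > v(x)$, Lemma~\ref{triang} gives $u(x_2) = u(x)$, not $u(x_2) < u(x)$. So replacing $x$ by $x_2$ does \emph{not} strictly decrease the left string; the set $\Sigma = \{\, u(x) : v(m-x) > v \,\}$ need not fail to have a least element for the reason you give, and your appeal to ``the trivial but important remark'' and Lemma~\ref{div} does not supply the missing strict inequality. The same objection applies to the variant where you split $m-x$ instead: you can push the left string of the new $x$ \emph{up}, never down. There is also a secondary issue: you invoke Fact~\ref{non-dense}, which is a feature of \emph{domestic} string algebras, whereas the proposition is stated and proved for an arbitrary string algebra.

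What the paper does instead is to build the leverage on the \emph{right} side. One first defines $v' = \sup X$, where $X$ is the set of finite strings $D > v$ such that $m$ admits a decomposition $m = m_1 + m_2$ with $m_1$ divisible on the left by some finite string $> u$ and $m_2$ divisible on the right by $D$; two-sidedness makes $v'$ an infinite string. For each presubstring $D_k$ of $v'$ one then lets $u(k)$ be the supremum of left strings $F > u$ occurring in such decompositions with right divisor $D_k$; these $u(k)$ form a weakly decreasing chain. The claim is that $\inf_k u(k) = u$ (which immediately gives the proposition). If not, pick a finite $C$ with $u < C < \inf_k u(k)$ and use pure injectivity to realize all the conditions simultaneously, producing $x$ with $u(x) \geq C$ and $v(m-x) = v'$. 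Now apply non-homogeneity to $m - x$: one obtains $y$ with $u(y) > u$ and $v(m-x-y) > v'$. Then $m = (x+y) + (m-x-y)$ exhibits a finite right string $> v' = \sup X$ lying in $X$, a contradiction. The essential idea you are missing is this use of the extremal value $v'$ on the right to trap the decomposition: the contradiction does not come from an order-theoretic obstruction in $\wh H_{-1}(S)$, but from overshooting the supremum $v'$.
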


It follows from Lemma \ref{triang} that $v(x)= v$ and $u(m-x)= u$ in this case.

\begin{proof}
Let $X$ be the set of finite strings $D\in H_1$, $D> v$ such that $m$ can be written as a sum $m_1+ m_2$, where $m_1$
is divisible by $E$ for some finite string $E\in H_{-1}$ greater than $u$, and $m_2$ is divisible by $D$. Because $m$
is not homogeneous this set is not empty. Because $M$ is 2-sided it follows immediately that the supremum $v'$ of $X$
is an infinite (1-sided) string larger than $v$.

Because $v'$ is infinite it can be written as $D_i l_i t_i$ for some direct arrows $l_i$ and finite strings $D_i\in X$
of strictly increasing length, each ending with an inverse arrow; in particular $D_1< D_2< \dots < v'$ in
$\wh H_1$ and $D_i$ is a \emph{presubstring} of $v'$ in the terminology of \cite{Har} (described as ``closed under
predecessors" in \cite{CB}).

For each positive $k\in \N$ consider the set of strings $F> u$ such that $m$ can be written as a sum $m= m_1+ m_2$,
where $m_1$ is divisible by $F$ and $m_2$ is divisible by $D_k$. Because $D_k\in X$, this set is non-empty; let $u(k)$
denote the supremum of strings in this set. Then $u(k)> u$ and clearly $u(k)\geq u(k+1)$ for each $k$. We claim that
the infinum of the $u(k)$ equals $u$, from which the result follows.

Suppose for a contradiction that this is not the case, so choose a finite string $C$ with $u< C< \inf_k u(k)$. Consider
the (infinite in $k$) set of (pp) conditions in a variable $x$ saying that $m$ can be written as a sum $x+ (m-x)$
such that $x$ is divisible by $C$ and $m-x$ is divisible by $D_k$. By assumption and construction, any finite subset
of this set is satisfied by some value of $x$ in $M$.

Since $M$ is pure injective ($=$ algebraically compact) there is a simultaneous solution for this set of conditions.
Namely there exists $x\in M$ such that $x$ is divisible by $C$ and $m-x$ is divisible by $D_k$ for each $k$. It
follows that the right string of $m-x$ is at least $v'= \sup X$, and therefore equals $v'$.

Because $m-x$ is not homogeneous, there exists $y$ such that $u(y)> u(m-x)= u$ and $v(m-x-y)> v(m-x)= v'$.

$$
\vcenter{
\xymatrix@R=14pt@C=12pt{%
&*+={\circ}\ar@{--}[ld]\ar@{--}[rd]\ar@{}+<10pt,4pt>*{_m}&&\\
*+={\circ}\ar@{}+<-9pt,4pt>*{_x}&&*+={\circ}\ar@{--}[ld]\ar@{--}[rd]\ar@{}+<19pt,4pt>*{_{m-x}}&\\
&*+={\circ}\ar@{}+<-10pt,-4pt>*{_y}&&*+={\circ}\ar@{}+<22pt,-4pt>*{_{m-x-y}}
}}
$$

\vspace{2mm}

Now, $m= x+ y+ (m-x-y)$. However $x+y$ has left string greater than $u$ (because $x$ and $y$ do). Thus there is a
finite string $C'> u$ such that $x+y$ is divisible by $C'$. Furthermore the right string of $m-x-y$ is greater than
$v'= \sup X$, so there is a finite string $D> v'$ such that this element is divisible by $D$. It follows that
$D\in X$, a contradiction.
\end{proof}

The following is derived immediately.

\begin{cor}\label{hom-dom}
Let $M$ be any nonzero pure injective module over a domestic string algebra. Then $M$ contains a homogeneous element.
\end{cor}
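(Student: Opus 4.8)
The plan is to derive Corollary~\ref{hom-dom} from Proposition~\ref{dense} by a proof by contradiction, using the domesticity hypothesis precisely through Fact~\ref{non-dense} (the chains $\wh H_i$ contain no copy of $\Q$). So suppose $M$ is a nonzero pure injective module over a domestic string algebra $A$ containing no homogeneous element. Fix $0\neq m\in e_S M$ and write $w(m)=u^{-1}.v$. Since $M$ has no homogeneous elements, $M$ is in particular 2-sided (a module whose only elements have 1-sided or finite strings would, by the classification of \cite{P-P04} together with Fact~\ref{har}, be on Ringel's list and hence contain a homogeneous element; alternatively one notes directly that a finite or 1-sided string element is homogeneous for trivial reasons), so Proposition~\ref{dense} applies.

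The idea is to run Proposition~\ref{dense} repeatedly to build a strictly descending chain of left strings in $\wh H_{-1}(S)$, all lying strictly above $u$, and densely ordered, contradicting Fact~\ref{non-dense}. First I would apply the proposition with some $u'> u$ to get $x_1\in e_S M$ with $u'> u(x_1)>u$ and $v(m-x_1)>v$; write $u_1=u(x_1)$. Now $x_1$ is again a nonzero element of $e_S M$ with $w(x_1)=u_1^{-1}.v$ (here $v(x_1)=v$ by the remark following the proposition and Lemma~\ref{triang}), and $M$ still has no homogeneous elements, so the proposition applies to $x_1$: for any $u''$ with $u<u''<u_1$ we obtain $x_2$ with $u''>u(x_2)>u$ — but I actually want $u(x_2)$ between $u$ and $u_1$, which the proposition gives since we may take $u''<u_1$. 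More to the point, I would apply the proposition to $x_1$ \emph{with target string $u_1$ itself}, i.e. choose $u'=u_1$, producing $x_2$ with $u_1>u(x_2)>u$. Iterating, and at each stage also inserting strings strictly between any two already produced (again by choosing the target $u'$ to be the larger of the pair), one manufactures a subset of $\{\,w'\in\wh H_{-1}(S): u<w'<u'\,\}$ that is order-isomorphic to the rationals.

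The cleanest way to organize the construction is: enumerate $\Q\cap(0,1)$ as $q_1,q_2,\dots$ and build, by recursion on $n$, elements $x_n\in e_S M$ together with left strings $u_n=u(x_n)$ such that $q_i<q_j \iff u_i> u_j$ (note the order-reversal between rationals and strings is harmless), with all $u_n$ strictly between $u$ and the initial $u'$. At step $n$, the rational $q_n$ has an immediate predecessor $q_i$ and successor $q_j$ among $q_1,\dots,q_{n-1}$ (using $0,1$ as endpoints, with $u_0:=u'$ and ``$u_{\text{bottom}}$''$:=u$ for bookkeeping); apply Proposition~\ref{dense} to the element $x_i$ with target string $u_j$ — more precisely, first split using homogeneity-failure of $x_i$ to land below $u_j$, then, if necessary, apply the proposition once more to push above $u_i$ is not needed since we start from $x_i$ and $u(x_i)=u_i$, so what we need is a string strictly between $u_i$ and $u_j$; the proposition gives $u_j>u(x_n)>u$, and one then replaces $x_n$ if needed by subtracting off the part whose left string is $\le u_i$, invoking Lemma~\ref{triang} to see the left string of the difference is exactly $u_n\in(u_i,u_j)$. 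Carrying this out gives an embedding of $(\Q,<)$ into $\wh H_{-1}(S)$, contradicting Fact~\ref{non-dense}; hence $M$ has a homogeneous element.

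The main obstacle is bookkeeping: ensuring at each recursion step that Proposition~\ref{dense} really can be applied to the already-constructed element $x_i$ and really yields a new string \emph{strictly between} the two prescribed neighbours $u_i$ and $u_j$, not merely below $u_j$ and above $u$. This requires checking that $x_i$ inherits the hypotheses of the proposition (nonzero, in $e_SM$, with a 2-sided string and with $M$ still admitting no homogeneous element — all automatic) and that the order-theoretic surgery via Lemma~\ref{triang} lets one trim the produced element so its left string lands in the required open interval. Once that single inductive step is pinned down, the density of $\Q$ does all the work and Fact~\ref{non-dense} delivers the contradiction. A possible simplification, worth checking, is whether one even needs the full strength of a $\Q$-embedding: it may suffice to produce a strictly decreasing $\om$-sequence together with a strictly increasing $\om$-sequence interleaved, or simply to quote that $\wh H_{-1}(S)$, being a chain with no $\Q$-subchain, cannot contain the specific densely-ordered set the iteration would otherwise force — but the $\Q$-embedding is the conceptually transparent route.
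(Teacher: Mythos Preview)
Your approach is the same as the paper's: the paper's proof is the single line ``This follows from Proposition~\ref{dense} and Fact~\ref{non-dense}'', and you have spelled out the intended argument --- iterate Proposition~\ref{dense} to manufacture a copy of $\Q$ inside $\wh H_{-1}(S)$, contradicting Fact~\ref{non-dense}. Your recursion is more elaborate than necessary: once you have elements $x_i$, $x_j$ with $u_i = u(x_i) < u_j = u(x_j)$, applying Proposition~\ref{dense} directly to $x_i$ with target $u' = u_j$ already yields some $x$ with $u_i < u(x) < u_j$, so no ``subtracting off'' via Lemma~\ref{triang} is needed.
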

\begin{proof}
This follows from Proposition \ref{dense} and Fact \ref{non-dense}.
\end{proof}

\section{Main result}\label{S-main}

Before proving the main result we have to introduce the \emph{bridge quiver} of a domestic string algebra $A$. It will
be more convenient for this paper to define it as a poset, rather than as a directed graph (see \cite{Sch97}) (though
this might lose information - for instance there can be more than one path between points in the bridge quiver).

We fix a set $\mB$ of representatives of bands over $A$ up to a cyclic permutation (but a band and its inverse are
represented by different elements in $\mB$). For instance, for $\wt A_1$ or $R_1$, this set consists of two elements
$C= \al\be^{-1}$ and $C^{-1}= \be\al^{-1}$.

We define $C\preccurlyeq D$ for bands $C, D\in \mB$ if there is a finite string of the form $CuD$ over $A$. It
follows from the combinatorics of domestic string algebras (for example, from Fact \ref{band} (ii), (iii) below) that
$C\preccurlyeq D\preccurlyeq C$ implies $C= D$, hence $\mB$ is a poset. For instance, for $A= \Lam_2$ we can choose
$C= \eps\del^{-1}$, $D= \al\be^{-1}$ together with $C^{-1}, D^{-1}$ as the elements of $\mB$. Then the bridge quiver
of $\Lam_2$ is a union of two chains, $C\prec D$ and $D^{-1} \prec C^{-1}$, the comparison $C\prec D$ being realized
by the string $C\eps\ga D$.

It follows from \cite{Sch97} that for domestic string algebras $\mB$ is a finite poset.

Now we are in a position to prove the main result of the paper.

\begin{theorem}\label{main}
Every indecomposable pure injective module over a domestic string algebra $A$ is on Ringel's list.
\end{theorem}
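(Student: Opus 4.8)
The plan is to reduce the classification to the known 1-domestic case by exploiting the structure already assembled in the previous sections. By Corollary \ref{hom-dom}, any nonzero indecomposable pure injective $M$ contains a homogeneous element $m$, with associated string $w(m)=u^{-1}.v$. If this string is \emph{non-periodic} (whether $1$-sided, $2$-sided, or finite), then Fact \ref{har} applies directly: $N_w$ is a direct summand of $M$, and since $M$ is indecomposable, $M\cong N_w$; as remarked after Fact \ref{har}, over a domestic algebra every string is almost periodic in each direction, so $N_w\cong M_w$, which is on Ringel's list. Likewise, if $w(m)$ is $1$-sided or finite, then by the classification of $1$-sided indecomposable pure injectives from \cite{P-P04} (recalled in Section \ref{S-hom}), $M$ is again on Ringel's list (a finite-dimensional string module, a $1$-sided direct sum/product/mixed module, or — via the periodic $1$-sided case — a Pr\"ufer or adic module). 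So the only case that is not yet settled is that of a $2$-sided \emph{totally periodic} homogeneous element, i.e.\ one with $w(m)={}^{\fty}D.D^{\fty}$ for a primitive cycle $D$; by assumption $D$ can be taken to be (a cyclic permutation of) a band in $\mB$.

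The heart of the argument is therefore the periodic $2$-sided case, and this is where the bridge quiver $\mB$ enters. First I would fix a homogeneous $m$ with $w(m)={}^{\fty}D.D^{\fty}$ and choose $D\in\mB$ maximal (or minimal) among all bands realizable this way for elements of $M$ — here finiteness of the poset $\mB$ (from \cite{Sch97}) is what makes such a choice possible. Using Lemma \ref{div} I can move $m$ along powers of $D$ on either side while staying homogeneous and merely shifting which copy of $D$ the string $w$ is based at, so the periodic picture is genuinely translation-invariant. The key reduction step is then: the endomorphisms of $M$ coming from the shift along $D$ (the ``expanding/contracting'' shift described in Section \ref{S-bas}), together with the action of a suitable primitive idempotent, exhibit $M$ as a module over (a localization of) the $1$-domestic string algebra attached to the single band $D$ — concretely, the subquiver-with-relations traced out by $D$ is $1$-domestic, and one shows $M$ restricts to, or is an extension of scalars from, an indecomposable pure injective over that $1$-domestic algebra. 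At that point \cite{Pun14} identifies $M$ as either an infinite-dimensional band module associated to $D$ (Pr\"ufer, adic, or generic) or a $2$-sided biperiodic module $M_w$; either way $M$ is on Ringel's list. One must also handle the degenerate ``band module with parameter $0$'' overlap noted after the discussion of $w(m)$, but that case is already absorbed into the string-module bookkeeping.

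The main obstacle I expect is precisely this reduction to the $1$-domestic algebra attached to $D$: one has to argue that no ``foreign'' band interferes, i.e.\ that the homogeneity of $m$ together with the maximality of $D$ in $\mB$ forces every element of $M$ in the relevant idempotent-cut to have a string built only from $D$ and its immediate neighbours, so that the support of $M$ is confined to the $1$-domestic piece. This is where the combinatorics of the bridge quiver — comparisons $C\preccurlyeq D$ and the statement that $\mB$ is a genuine poset (Fact \ref{band}, cited below) — does the real work: one needs that a band strictly comparable to $D$ would produce a homogeneous element violating maximality, via Proposition \ref{dense} and Lemma \ref{triang}. Once the support is pinned down, invoking \cite{Pun14} is essentially a black box. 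I would also take care, throughout, to keep the choice of the partition $H_{\pm1}$ coherent so that the string bookkeeping of Lemmas \ref{triang} and \ref{div} stays consistent under the shifts.
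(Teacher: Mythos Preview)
Your overall architecture matches the paper's: reduce to the 2-sided case via \cite{P-P04}; use Corollary \ref{hom-dom} and Fact \ref{har} to dispose of non-periodic homogeneous elements; then, in the remaining periodic case, pick an extremal band $D$ in the bridge quiver and reduce $M$ to a module over a 1-domestic algebra so that \cite{Pun14} applies. That skeleton is correct.

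Where your proposal has a genuine gap is exactly the step you flag as the ``main obstacle''. The paper does \emph{not} argue via shift endomorphisms or localization, nor by showing that a competing band would contradict extremality of $D$ through Proposition \ref{dense} (which, in any case, concerns modules \emph{without} homogeneous elements). Instead it proves directly (Lemma \ref{ga}) that $\gamma M=0$ for every arrow $\gamma$ not occurring in $D$ or $D^{-1}$, so one may factor out the ideal such arrows generate and recognise the quotient algebra as 1-domestic (Lemma \ref{1-dom}). The mechanism is model-theoretic: because $M$ is indecomposable pure injective, any two nonzero elements are linked by a pp formula (\cite[4.3.72]{Preb2}); one links the homogeneous $m$ with string ${}^{\fty}D.D^{\fty}$ to a putative $0\neq n\in\gamma M$, passes to an \emph{indecomposable} free realization of the linking formula, and then analyses the resulting morphism from a finite-dimensional string or band module into $M$ using the explicit description of such maps (graph maps, \cite{Kra91}). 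An essential preliminary is Lemma \ref{D}: \emph{every} nonzero element of $\beta M$ (not just the chosen $m$) is automatically homogeneous with string ${}^{\fty}D.D^{\fty}$. That lemma is what controls the images of basis elements under the morphism above, and its proof uses minimality of $D$ in the bridge quiver among bands \emph{realized} in $M$ --- where ``realized'' means merely $\alpha M\cap\beta M\neq 0$, a weaker and hence easier-to-contradict condition than your ``realizable via a homogeneous element''. Your indecision between ``maximal or minimal'' is symptomatic: only minimality makes Lemma \ref{D} go through.
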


The proof of this result will need a few lemmas. Let $M$ be an indecomposable pure injective $A$-module. As we already
mentioned, if $M$ is 1-sided, then it is on Ringel's list. Thus we may assume that $M$ is 2-sided. Furthermore by
Corollary \ref{hom-dom} we know that $M$ contains a homogeneous element. If the string of this element is non-periodic
then, by Fact \ref{har}, $M$ is on Ringel's list.

Thus we may assume for the rest of the proof that $M$ is \textbf{2-sided, contains a homogeneous element, and any
homogeneous element in $M$ is periodic} (i.e.\ its string is 2-sided and periodic).

We recall the following facts about the combinatorics of bands over domestic string algebras.

\begin{fact}(see \cite{Rin95})\label{band}
Suppose that $E, F$ are bands over a domestic string algebra.

(i) If $\al^{-1}\be$ occurs in both $E$ and $F$ then these bands are equivalent up to cyclic permutation.

(ii) If both $E$ and $F$ begin with $\al$ then $E=F$.

(iii) If $E$ begins with $\al$, ends with $\be^{-1}$ and the same is true for a string $G$ then $G$ is a power of $E$.
\end{fact}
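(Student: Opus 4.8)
The plan is to treat (ii) as essentially the definition of domesticity, to deduce (i) from it by a cyclic–rotation trick, and then to give a short self-contained argument for (iii), which is where the only genuine (but still elementary) work lies. For (ii): a band is by definition a primitive string of the form $\al u\be^{-1}$, so bands $E$ and $F$ beginning with the arrow $\al$ are exactly ``bands starting with $\al$'' in the sense of the definition of domesticity, and that definition asserts there is at most one such; hence $E=F$.

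For (i): suppose $\al^{-1}\be$ occurs inside $E$ (as a subword of $E$ read cyclically). Since $\al^{-1}\be$ is reduced we have $\al\neq\be$. Let $E'$ be the cyclic rotation of $E$ that begins with the direct letter $\be$ at this occurrence; then $E'$ is a subword of $E^2$, hence a string, it is primitive, it begins with $\be$, and it ends with the letter immediately preceding this $\be$, namely $\al^{-1}$. Thus $E'=\be w\al^{-1}$ is a band in normal form beginning with $\be$, and $E'$ is a cyclic permutation of $E$. Forming $F'$ from $F$ in the same way, part (ii) gives $E'=F'$, and so $E$ and $F$ are both cyclic permutations of $E'=F'$, as required.

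For (iii): write $E=\al u\be^{-1}$ and $G=l_1l_2\cdots l_n$ as a word in letters, so $l_1=\al$ and $l_n=\be^{-1}$. Because $\al$ and $\be$ have the same target, the walk traced by $G$ begins and ends at that vertex, so $G$ is a closed walk and $G^m$ is a legitimate walk for every $m\geq 1$. I claim each $G^m$ is a string: the only subwords of $G^m$ not already occurring in $G$ are those straddling a junction $\cdots\be^{-1}\mid\al\cdots$ between consecutive copies of $G$; there is no cancellation there since $\al\neq\be$, and since a monomial relation is a word in direct letters only — and the inverse of such a relation a word in inverse letters only — no relation or inverse relation can straddle such a junction, as it would have to contain both the inverse letter $\be^{-1}$ and the direct letter $\al$. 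Now let $v$ be the primitive root of the word $G$, so $G=v^m$ with $v$ primitive and $m\geq 1$. If $m\geq 2$ then the step $l_{|v|+1}$ is $\al$ again, so the walk must sit at the target of $\al$ at position $|v|$; but that target is also the position of the walk at $0$ (as $l_1=\al$), so the walk of $v$ is closed (trivially so if $m=1$). Moreover $v$ is a subword of the string $G$, hence itself a string; it begins with the direct letter $\al$ and, by the periodicity of $G=v^m$, ends with the inverse letter $\be^{-1}$; and it is primitive. Therefore $v$ is a band beginning with $\al$, so by (ii) $v=E$, whence $G=v^m=E^m$.

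There is no serious obstacle here: all three parts reduce to the definitions of string, band and domesticity together with elementary combinatorics of words. The only step that is not pure definition-chasing is the junction argument in (iii) — recognising that a monomial relation or its inverse cannot straddle a $\be^{-1}\mid\al$ junction — and I would additionally check the endpoint conventions for strings in \cite{B-R} before committing to the ``closed walk'' bookkeeping used in the last paragraph.
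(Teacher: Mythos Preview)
The paper does not give its own proof of this statement: it is recorded as a Fact with a reference to \cite{Rin95} and then simply used. Your argument is correct and supplies a self-contained justification that fits the paper's conventions. In particular, (ii) is literally the definition of domesticity adopted in Section~\ref{S-bas}; your rotation argument for (i) is sound because a cyclic rotation of a band to begin at an occurrence of $\be$ preceded by $\al^{-1}$ is again a band in the paper's normal form (it is a substring of $E^2$, primitive, of shape $\be\cdots\al^{-1}$, and its square is a substring of $E^3$); and your primitive-root argument for (iii) works as written. Two small points worth making explicit: first, in (iii) you should record that $v^2$ is a string (needed for $v$ to be a band), which follows since $v^2$ is a substring of $G^2$ and you have already shown each $G^k$ is a string; second, your junction argument uses that the relations are \emph{monomial}, which is part of the standing hypothesis on string algebras in this paper.
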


Let us say that a band $C= \ga\dots \del^{-1}$ is \emph{realized} in $M$ if there is a nonzero element $n\in \ga M\cap \del M$ (we do not assume $n$ to be homogeneous), in particular $n$ belongs to the socle of $M$.

Choose $D= \al\dots \be^{-1}$ to be any band which is minimal in the ordering on the bridge quiver $\mB$ among all
bands realized in $M$; we may assume that $\al\in H_1$ and $\be\in H_{-1}$.

\begin{lemma}\label{D}
Let $D$ be as above.  Suppose that $ m$ is any nonzero element in $ \be M$. Then $m$ is homogeneous with
$w(m)= {}^{\fty} D.D^{\fty}$.
\end{lemma}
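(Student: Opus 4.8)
The plan is to show that any nonzero $m \in \be M$ has right string $v(m) = D^{\fty}$ and left string $u(m) = (\al \dots \be^{-1})^{\fty}$ (i.e.\ $u(m)^{-1} = {}^{\fty}D$), and then that $m$ is homogeneous. The core difficulty, and the reason $D$ was chosen minimal in $\mB$ among bands realized in $M$, is to rule out the various ways $v(m)$ or $u(m)$ could escape the periodic pattern $D^{\fty}$; every such escape will be engineered into producing a nonzero element in the socle witnessing a band strictly smaller than $D$ in the bridge quiver, contradicting minimality.

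First I would establish that $m$ can be repeatedly divided by $D$ on the right. Since $m \in \be M$ and $\be \in H_{-1}$, the element $m$ sits in the socle; write $m = \al n_1$ is impossible directly, so instead trace the band $D = \al \dots \be^{-1}$: because $m \in \be M$ there is $p$ with $m = \be p$, and $v(p) \geq \be^{-1}$... more carefully, one argues that the string $v(m)$ must begin with $D$. The key input is Fact \ref{band}: if $v(m)$ began with some string that is not a prefix of $D$, then following that string down into $M$ produces (using the trivial remark relating $u,v$ under multiplication by arrows, together with pure injectivity to divide all the way) a nonzero socle element realizing a band $E$ with $E \prec D$ or $E$ incomparable — and by Fact \ref{band}(ii),(iii) the combinatorics force $E$ to be realized and $\preccurlyeq D$, contradicting minimality unless the prefix is exactly $D$. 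Iterating, $v(m) \geq D^k$ for all $k$, so $v(m) \geq D^{\fty}$; and since $D^{\fty} \in \wh H_1$ is a 1-sided almost periodic string over a domestic algebra, any strictly larger string would again, by the same band combinatorics (Fact \ref{band}(i)), force a second band through the socle element obtained by dividing, contradicting minimality of $D$. Hence $v(m) = D^{\fty}$. The symmetric argument on the left, applied to $m \in \be M$ (note $\be$ is the \emph{last} letter of $D$, so $m$ being $\be$-divisible is exactly what feeds the left-hand iteration), gives $u(m)^{-1} = {}^{\fty}D$, i.e.\ $w(m) = {}^{\fty}D.D^{\fty}$.

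It remains to prove $m$ is homogeneous. Suppose $m = x + y$ with $u(x) > u(m)$ and $v(y) > v(m) = D^{\fty}$. By Lemma \ref{triang}, $v(x) = D^{\fty}$ and $u(y) = u(m)$. Now $v(y) > D^{\fty}$ means $y$ is a nonzero socle element divisible on the right by some finite string $G > D^{\fty}$, hence $G = D^j l \dots$ with $l$ a letter making $D^j l$ no longer a prefix of a power of $D$; dividing $y$ down along $D^j$ and examining the branch at $l$ produces a nonzero socle element realizing a band which, by Fact \ref{band}(i),(iii), is comparable with $D$ and realized in $M$ — and one checks it is $\neq D$ and $\preccurlyeq D$, contradicting minimality. (Symmetrically one could instead exploit $u(x) > u(m)$.) Therefore no such decomposition exists and $m$ is homogeneous.

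The main obstacle I anticipate is the bookkeeping in the step "following the offending string into $M$ yields a smaller realized band": one must use pure injectivity (algebraic compactness) to solve the infinite system of divisibility conditions simultaneously, so that the limiting element genuinely lies in the socle and genuinely witnesses a band $\gamma \dots \delta^{-1}$ with a nonzero element in $\gamma M \cap \delta M$, and then invoke the correct clause of Fact \ref{band} to place that band strictly below $D$ in $\mB$ (or to identify it with $D$, closing the loop). Getting the letter-by-letter combinatorics right — in particular that the first place $v(m)$ or $v(y)$ deviates from the periodic pattern is exactly where a new band sprouts — is where Harland's framework does the real work; in the domestic case it is controlled by Fact \ref{band} together with the finiteness of $\mB$.
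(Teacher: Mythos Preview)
Your proposal has a genuine structural gap: you try to prove $v(m)=D^{\fty}$ directly by band combinatorics, and then deduce homogeneity from that. Neither step works as written, and you have missed the key input that makes the lemma easy.

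First, knowing $m\in\be M$ tells you something about the \emph{left} string of $m$ (namely that $u(m)$ begins with the direct letter $\be$, i.e.\ $u(m)^{-1}$ ends in $\be^{-1}$), not about the right string. Your attempt to start the argument with ``$v(m)$ must begin with $D$'' has no footing; you yourself note ``$m=\al n_1$ is impossible directly'' and then the paragraph trails off. The paper's proof proceeds in the opposite order: one first shows $u(m)^{-1}={}^{\fty}D$. This is clean: $u(m)^{-1}$ is an infinite string ending in $\be^{-1}$, hence (domesticity) almost periodic, say ${}^{\fty}C u'\be^{-1}$; since $u(m)^{-1}D$ is a string, either $C$ is a cyclic rotation of $D$ (and then Fact~\ref{band}(iii) forces $u(m)^{-1}={}^{\fty}D$), or $C\prec D$ and $C$ is realized in $M$, contradicting minimality.

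Second, homogeneity is \emph{not} obtained by chasing a hypothetical $v(y)>D^{\fty}$ into a smaller band; your sketch that ``the branch at $l$ produces a nonzero socle element realizing a band $\preccurlyeq D$'' is unsupported (why a band at all? why below $D$?). The paper's argument is different and short: if $m=x+(m-x)$ with $u(x)>u(m)$, then since $u(m)$ begins with $\be$ we have $x\in\be M$ as well, so the first step applies verbatim to $x$ and gives $u(x)^{-1}={}^{\fty}D=u(m)^{-1}$, contradicting $u(x)>u(m)$.

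Finally, and most importantly, you never invoke the standing hypothesis in force at this point of Section~\ref{S-main}: \emph{every} homogeneous element of $M$ has a periodic string. Once $m$ is known to be homogeneous with $u(m)^{-1}={}^{\fty}D$, periodicity of $w(m)$ immediately forces $v(m)=D^{\fty}$. That is how the paper gets $v(m)$; no separate combinatorial argument is needed or attempted.
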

\begin{proof}
First we will check that $u(m)^{-1}= {}^{\fty} D$. Now, $u(m)^{-1}$ is an infinite string which ends with $\be^{-1}$;
since $A$ is domestic, this string is almost periodic: $u(m)^{-1}= {}^{\fty} C u' \be^{-1}$ for some band $C$ and
some finite string $u'$. If $C$ is a cyclic permutation of $D$ then, changing $u'$ if necessary, we may assume that
$C=D$, hence $u(m)^{-1}= {}^{\fty} D$ by Fact \ref{band} (iii).

Otherwise $C$ is not a cyclic permutation of $D$. Because $u(m)^{-1} D$ is, clearly, a string it follows that
$C\prec D$ in the bridge quiver. Since $C$ is obviously realized in $M$, we get a contradiction to minimality of $D$.

Second, we show that $m$ is homogeneous. If not, then $m= x+ (m-x)$ where $u(x)> u(m)$ in $\wh H_{-1}$ and
$v(m-x)> v(m)$ in $\wh H_1$, in particular $x\neq 0$. From $u(x)\geq  u(m)$ it follows that $x\in \be M$, hence
$u(x)^{-1}= {}^{\fty} D$ by the first part of the proof. But then $u(x)= u(m)$, a contradiction.

Finally, $v(m)= D^{\fty}$ (in particular $m$ is divisible by $\al$) since we are assuming that homogeneous elements
have periodic words.
\end{proof}

For the remaining portion of the proof we will assume some familiarity with free realizations of pp formulas
(see \cite[\S 1.2.2]{Preb2}). Namely (in our context), we say that a pointed finite dimensional module $(N,n)$ is a
\emph{free realization} of a pp formula $\phi(x)$ if $n$ satisfies $\phi$ in $N$ and, for any element $l\in L$
satisfying $\phi$ in a module $L$, there exists a morphism $f: N\to L$ sending $n$ to $l$.

Every formula has a finite dimensional free realization (see \cite[1.2.14]{Preb2}). Recall that if $a\in A$ then the
\emph{divisibility formula} $a\mid x$ claims that there exists $y$ such that $x= ay$, hence it defines in each left
$A$-module $M$ the subspace $aM$. For instance this divisibility formula has $(A,a)$ as a free realization; furthermore
if $a\in Ae$ for some idempotent $e$, then the pointed module $(Ae,a)$ is another free realization of $a\mid x$.

For another example, the string module $M(\be\al^{-1}\be^{-1})$ pointed at the left end is a free realization of the
divisibility formula $\be\mid x$ over $R_1$ which therefore is equivalent to the formula $(.\be\al^{-1}\be^{-1})$.

$$
\vcenter{
\xymatrix@R=14pt@C=8pt{%
&*+={\circ}\ar[dl]_{\be}\ar[dr]^{\al}&&\\
*+={\bullet}&&*+={\circ}\ar[dr]^{\be}&\\
&&&*+={\circ}
}}
$$

\vspace{2mm}

Observe that over $\wt A_1$ the same formula $\be\mid x$ has $M(\be\al^{-1})$ as a free realization, hence is
equivalent to the formula $(.\be\al^{-1})$.

$$
\vcenter{
\xymatrix@R=14pt@C=8pt{%
&*+={\circ}\ar[dl]_{\be}\ar[dr]^{\al}&&\\
*+={\bullet}&&*+={\circ}&\\
}}
$$

\vspace{2mm}

The following lemma says that $D$ covers the support of $M$.

\begin{lemma}\label{ga}
If $\ga$ is an arrow not occurring in $D$ or $D^{-1}$ then $\ga M= 0$.
\end{lemma}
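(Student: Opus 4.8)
The plan is to argue by contradiction: suppose $\ga$ is an arrow not occurring in $D$ or $D^{-1}$ but $\ga M\neq 0$, and produce from this a band realized in $M$ which is strictly below $D$ in the bridge quiver $\mB$, contradicting the minimality of $D$. So pick $0\neq n\in\ga M$, say $n\in e_S M$; then $n=\ga p$ for some $p$, and by the trivial remark before Lemma \ref{triang} the left string $u(n)^{-1}$ ends with the direct letter $\ga$ (reading $u(n)^{-1}$ from the left, $u(n)$ begins with $\ga^{-1}$). Since $A$ is domestic, $u(n)^{-1}$ is almost periodic, so $u(n)^{-1}={}^{\fty}C\,u'$ for some band $C$ and finite string $u'$; because $\ga$ does not occur in $D$ or $D^{-1}$, this $C$ is not a cyclic permutation of $D$ (its periodic part is a cycle through arrows, and if $C$ were $D$ up to cyclic permutation then, extending, $\ga$ would have to appear in a string compatible with $D$'s recurrence — I will need to pin this down). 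The key point is then that $C$ is realized in $M$ (write $C=\ga_1\dots\del_1^{-1}$; the socle element where the two sides of $u(n)^{-1}$'s periodic block meet lies in $\ga_1 M\cap\del_1 M$), and $C\preccurlyeq D$: indeed $u(n)^{-1}$ can be extended on the right, past $\ga$ and through whatever connects the $\ga$-part of the quiver to $D$'s cycle, to give a string of the form $C w D$, using that $\be M\neq 0$ (Lemma \ref{D}) so $D$ itself is genuinely realized. Minimality of $D$ then forces $C=D$, contradicting that $\ga$ occurs in $C$ but not in $D$.

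Carrying this out, the steps in order are: (1) fix $0\neq n\in\ga M$ and record, via the trivial remark, that $u(n)$ begins with $\ga^{-1}$ (or symmetrically $v(n)$ begins with $\ga$ — handle both by replacing $M$-element by a divisor/multiple and using that $\ga M\neq 0$ means some string of $M$ genuinely traverses $\ga$); (2) invoke domesticity to write the relevant infinite one-sided string of $n$ in almost-periodic form $vlC^{\fty}$ with $C$ a primitive cycle; (3) observe $C$ is realized in $M$ and that $\ga$ occurs in $C$ (because the almost-periodic tail must eventually re-enter a cycle, and there are only finitely many bands, so $\ga$ lies on the cyclic part or on a finite connecting segment — in the latter case iterate to reach a cycle that is $\preccurlyeq$ something realized); (4) show $C\preccurlyeq D$ in $\mB$ by exhibiting a string $C u D$, using that $D$ is realized in $M$ (Lemma \ref{D} gives $\be M\neq 0$) and that the socle element witnessing $C$ and some witness of $D$ are connected through $M$, hence through a string over $A$; (5) conclude $C=D$ by minimality, contradicting $\ga\in C$, $\ga\notin D$.

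The main obstacle is step (4): turning ``$C$ and $D$ are both realized in the \emph{same} module $M$'' into the purely combinatorial statement ``there is a string $CuD$ over $A$.'' One cannot simply read off a walk in $M$, because $M$ is infinite-dimensional and need not be a string module; what is true is that the element $n$ has a well-defined two-sided string $w(n)=u(n)^{-1}.v(n)$ over $A$, and $\ga$ appearing in $M$ forces $\ga$ into $w(n)$, whose almost-periodic ends are (cyclic permutations of) bands $C$ and, on the other side, something comparable to $D$; so really the comparison in $\mB$ should be extracted from the single string $w(n)$ together with the string of a $\be M$-element supplied by Lemma \ref{D}, glued at a common socle point of $M$. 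I expect the clean way is: any nonzero element lies in $\be' M$ for the first letter $\be'$ of its left string, and by repeatedly dividing/multiplying and using finiteness of $\mB$ (Fact \ref{band}) one reaches an element whose left string's cycle is $\preccurlyeq D$; minimality of $D$ pins that cycle to be $D$, and then $\ga\notin D$ forces $\ga M=0$. Fact \ref{band}(iii) is what guarantees the almost-periodic form is unique enough for the bridge-quiver comparison to be well-posed.
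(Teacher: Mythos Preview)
Your proposal has a genuine gap at step (4), and it cannot be closed along the lines you sketch. The core problem is that you never use the indecomposability of $M$. From $0\neq n\in\ga M$ you can indeed extract, via the almost-periodic tail of $u(n)^{-1}$, a band $C$ realized in $M$; but nothing in your data produces a string of the form $CuD$ over $A$. The two-sided string $w(n)$ connects the left and right periodic tails of $n$ to \emph{each other}, not to $D$, and ``gluing at a common socle point of $M$'' strings read off from \emph{different} elements of $M$ into a single string over $A$ is simply not a valid move: two elements of the same module, even an indecomposable one, need not be joined by any walk in the quiver. Step (3) is also unjustified: $\ga$ appears in $w(n)$ but may lie only in the finite connecting segment, not in the periodic band $C$, and ``iterating'' does not help, since every band you produce this way is realized in $M$ and hence, by minimality, is $\not\prec D$ --- you never reach anything strictly below $D$.

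The paper's proof is of an entirely different character and uses indecomposability in an essential way. One fixes a homogeneous $m$ with $w(m)={}^{\fty}D.D^{\fty}$ and invokes the model-theoretic fact that in an indecomposable pure injective any two nonzero elements are linked by a pp formula: there exists $\phi(x,y)$ with $M\models\phi(m,n)\wedge\neg\,\phi(m,0)$. After arranging that $\phi$ implies $(D.D)(x)\wedge(\ga\mid y)$ and has an indecomposable free realization $(N,(c,d))$, one obtains a morphism $f\colon N\to M$ with $f(c)=m$ and $f(d)=n$. The work is then a case analysis on whether $N$ is a string or a band module: using the explicit description of maps out of string and band modules, together with Lemma~\ref{D} and Fact~\ref{band}, one shows in each case that $f$ factors through a quotient of $N$ in which $d$ dies while the image of $c$ is still sent to $m$. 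This yields $M\models\phi(m,0)$, the desired contradiction. The pp formula $\phi$ is precisely what carries information between the ``$\ga$-part'' and the ``$D$-part'' of $M$ --- the bridge your purely combinatorial approach cannot build.
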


After having proved this lemma, we will be able to assume that every arrow occurs in $D$ or $D^{-1}$.

\begin{proof}
Otherwise choose $0\neq n\in \ga M$. Take $m$ to be a homogeneous element of $M$ whose string is\
${}^{\fty} D. D^{\fty}$. Since $M$ is pure injective and indecomposable, by \cite[4.3.72]{Preb2} there is a pp formula
$\phi(x,y)$ such that $M\ms \phi(m,n)\wg \neg \phi(m,0)$ (``$M\ms \phi(m,n)\wg \neg \phi(m,0)$" may be  read as
``the pair $(m,n)$ satisfies the condition $\phi$ in $M$ but $(m,0)$ does not"; the fact that $M$ is indecomposable
pure injective means that there is a non-trivial link, expressed by a pp formula, between any nonzero elements).
We will show that this leads to a contradiction.

We may assume that $\phi(x,y)$ implies $(D.D)(x)$ and implies $\ga\mid y$ (both these are pp conditions, so may be
incorporated into $\phi$). We may further suppose, possibly changing $m$, that $\phi$ has an indecomposable free
realization. To see this, decompose a free realization, $N$, of $\phi$ as a direct sum of indecomposables,
$N=\bigoplus N_i$, and write $\phi$ as a corresponding sum of formulas $\phi_i$ with indecomposable free realizations;
then decompose $(m,n)= (\sum m_i, \sum n_i)$ accordingly, so $(m_i, n_i)$ realizes $\phi_i$.

Since $\phi$ implies $ (D.D)(x)$ so does each $\phi_i$, and similarly $\phi_i$ implies $\ga\mid y$. It follows from
Lemma \ref{D} that either $m_i= 0$ or $m_i$ is a homogeneous element with string $^{\fty} D. D^{\fty}$. Furthermore
$n_i\in \ga M$.

For each $j$ we have $M\ms \phi(m_j,n_j)$ and there is $i$ such that $M\ms \neg \phi_i(m_i,0)$ (in particular both
$m_i, n_i$ are nonzero).  Otherwise, for each $i$ we would have $M\ms \phi_i(m_i, 0)$ and, adding up, would get
$M\ms \phi(m,0)$, a contradiction. Now replace $\phi$ with $\phi_i$, $m$ with $m_i$ and $n$ with $n_i$.

Therefore we can choose $m$, $n$ and $\phi$ so that $\phi$ has a finite dimensional indecomposable free realization,
say $(c,d)\in N$.  There is a morphism $f: N\to M$ such that $f(c)= m$ and $f(d)= n$.

$$
\vcenter{%
\xymatrix@C=20pt@R=30pt{%
*+={}\ar@{-}@/^.5pc/[rrr]\ar@{-}@/_.5pc/[rrr]&*+={\bullet}\ar[d]_f\ar@{}+<0pt,12pt>*{_c}&
*+={\bullet}\ar[d]^f\ar@{}+<0pt,12pt>*{_d}&*+={}\ar@{}+<10pt,0pt>*{_N}\\
*+={}\ar@{-}@/^.5pc/[rrr]\ar@{-}@/_.5pc/[rrr]&*+={\bullet}\ar@{}+<0pt,-12pt>*{_m}&
*+={\bullet}\ar@{}+<0pt,-12pt>*{_n}&*+={}\ar@{}+<10pt,0pt>*{_M}
}}
$$

\vspace{2mm}

Suppose first that $N$ is a string module, $N=M(G)$ say, where $G$ is a finite string. Choose a standard basis of $N$
and write $c= \sum_i \lam_i c_i$ with each $c_i$ in this basis and each $\lam_i\neq 0$. By \cite[4.2]{P-P04}, $c_i$
is divisible by $D$ to both the right and the left, in particular $c_i\in \al N\cap \be N$ so, if we set $m_i=f(c_i)$
then, by Lemma \ref{D}, $m_i=0$ or $w(m_i)= {}^\infty D.D^\fty$.

The reader could have in mind the following diagram for $A= R_1$, where $D= \be\al^{-1}$ and $N$ is the following
string module:

$$
\vcenter{%
\xymatrix@C=10pt@R=16pt{%
&&&&&&*+={\circ}\ar[dl]_{\al}\ar[dr]^{\be}&&*+={\circ}\ar[dl]^{\al}\ar[dr]^{\be}&&
*+={\circ}\ar[dl]^{\al}\ar[dr]^{\be}&\\
&*+={\circ}\ar[dl]_{\be}\ar[dr]_{\al}&&*+={\circ}\ar[dl]_{\be}\ar[dr]_{\al}&&
*+={\circ}\ar[dl]_{\be}&&*+={\circ}&&*+={\bullet}\ar@{}+<0pt,-10pt>*{_{c_2}}&&*+={\circ}\\
*+={\circ}&&*+={\bullet}\ar@{}+<0pt,-10pt>*{_{c_1}}&&*+={\circ}&&&&&&&
}}
$$

\vspace{3mm}

However this diagram is misleading. For instance $v(c_1)= \be \al^{-1}\be\al\dots$ is larger than
$D^{\fty}= \be\al^{-1}\be\al^{-1}\dots$ which cannot happen (see arguments below).

Write $G=G_i.H_i$ where $c_i$ lies at the cut point and set $H_i=H_i'H_i''$ where the length of $H_i'$ is equal to
the length of $D$; then we must have $H_i'=D$: for $H_i$ must be at least $D$ since $c_i$ is divisible by $D$ to
the right, but it cannot be larger than $D$ since then the right word of $m_i$, the image of $c_i$, would begin with
a word larger than $D$ (unless $m_i= 0$).  It could be that $c_i$ is so far to the right in $G$ that the length
of $H_i$ is less than that of $D$, in which case, by the same reasoning, $H_i$ must be an initial substring of $D$
(and will be a presubstring of $D$).

Similarly write $d= \sum_j \mu_j d_j$ such that each $d_j$ is an element of the chosen basis and each $\mu_j\neq 0$.
Again by choice of $\phi$ and \cite[4.2]{P-P04}, $d_j \in \ga N$, in particular $c_i\neq d_j$ for all $i, j$. We will
show that, in fact, the $c_i$ and $d_j$ are sufficiently separated in $G$. Namely we can find a submodule $L$ of $N$
which contains all the $d_j$ but no $c_i$ (with $m_i\neq 0$) and such that, if we factor out $L$,  by
$\pi_L: N\la N'= N/L$, we can then embed $N'$ into $M$ so that the image of $c$ under the composite map is exactly
$m=f(c)$.

$$
\vcenter{%
\xymatrix@C=20pt@R=18pt{%
*+{N}\ar[r]^{\pi_L}\ar[d]_{f}&*+{N'}\ar[dl]^{f'}\\
*+{M}&
}}
$$

\vspace{2mm}

In order to embed $N'$ in $M$ we have to ensure that each segment of the string $G$ which remains after factoring out
$L$ is a presubstring of $^\fty D.D^\fty$.  We will then be able to complete the argument since, from $N\ms \phi(c,d)$
we will obtain $N'\ms \phi(\pi_L(c),0)$ and hence $M\ms \phi(m,0)$, which is a contradiction.

We must define $L$.  Fix $d_j$; we will find a presubstring of $G$ which contains $d_j$ but no $c_i$, with $m_i\neq 0$
(terminologically, we will confuse strings and their realizations). Suppose that there is some $c_k$ to the left of
$d_j$; choose the nearest one, say $c_i$. Consider the portion of $G$ between $c_i$ and $d_j$; as seen above, it has
the form $DH$ for some (at first sight possibly empty) string $H$.

$$
\vcenter{%
\xymatrix@C=20pt@R=14pt{%
&&&&*+={\circ}\ar[dl]_{\tau}&\\
*+={\circ}\ar@{}+<0pt,-10pt>*{_{c_i}}\ar@{-}@/^1pc/[rr]^{D}&&*+={\circ}\ar@{-}@/^1pc/[rrr]^(.8){H}&
*+={\circ}\ar[dr]_{\varepsilon}&&*+={\circ}\ar@{}+<0pt,-10pt>*{_{d_j}}\\
&&&&*+={\bullet}&
}}
$$

\vspace{2mm}

Note that either $H$ ends with $\ga^{-1}$ or $DH$ continues in $G$ as $DH\ga$.  Working along $DH$ from $c_i$,
consider the first letter where $DH$ (or $DH\ga$) differs from, and hence, as argued above, is strictly less than,
$D^\fty$; say $D^\fty$ has $\tau$ at that point and $DH$ has $\eps^{-1}$.   Clearly this alternation of letters
occurs strictly on the left of $d_j$ (otherwise $\ga$ will be a part of $D$, a contradiction). We put the (marked)
image of $\eps$ into $L$ along with all subsequent basis elements up to and including $d_j$.

On the other hand, since the corresponding letter, $\tau$, in $D^\fty$ is direct, the remaining string will, at that
point, be closed under predecessors in $D^\infty$, allowing us to use the restriction of $f$ to define the embedding
of $N'$ into $M$.  We use the same procedure to the right of $d_j$ and note that what we have put into $L$ is indeed
given by a presubstring, hence is a submodule of $N$.

Now assume that $N= M(E, \lam, k)$ is a band module for some band $E$. If $E$ is equivalent to $D$ then, since $\ga$
does not appear in $D$, $\ga N= 0$, hence $d=0$. Applying $f$ we obtain $M\ms \phi(m,0)$, the required contradiction.

Suppose, then, that $E$ is not equivalent to $D$. Since $\phi$ implies $ (D.D)(x)$, there is a morphism
$g: M(D.D)\to N$ sending the basis element $z\in M(D.D)$, between two copies of $D$, to $c$.  The description of
morphisms between string and band modules (see \cite[Sec. 6.3.2]{Har}) says that $g$ is a linear combination
$\sum_i \mu_i g_i$ of simple string maps (also called graph maps in \cite{Kra91}). Consider the maps $g_i$.

By the description of simple string maps, $g_i$ is given by first possibly removing arrows at either or both ends of
$D.D$ so that it equals a presubstring of\, ${}^{\fty} E^{\fty}$ - as in the argument above, this corresponds to mapping
$M(D.D)$ to a factor module and then embedding that into the direct sum module $M({}^{\fty} E^{\fty})$ - and then
applying a canonical morphism from $M(^{\fty} E^{\fty})$ to the band module $N$. Because, by Fact \ref{band}(ii) $D$
is not a presubstring of\, $^{\fty} E^{\fty}$, it follows that proper factoring must occur at each end of $D$. When
factoring we will properly increase the left and the right string of $z$ making each definitely larger than $D^{\fty}$.

Now, applying $g$ to $z$ then $f$ to $c$, we see that $m$ is a sum of elements $m_i$ such that the left and right
string of each $m_i$ exceeds $D^{\fty}$. Since $m$ is homogeneous this is not possible.
\end{proof}

Replace $A$ by the string algebra obtained by factoring out the ideal generated by all arrows that do not occur in $D$.
By the result we have just proved, this ideal is contained in the annihilator of $M$, so we may assume that each arrow
in the quiver defining $A$ occurs in $D$ or $D^{-1}$.  From this it now follows that $A$ is 1-domestic.

\begin{lemma}\label{1-dom}
Every band $E$ of $A$ is a cyclic permutation of $D$ or $D^{-1}$, therefore $A$ is 1-domestic.
\end{lemma}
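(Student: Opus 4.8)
The plan is to exploit the combinatorial rigidity of bands over domestic string algebras (Fact~\ref{band}) together with the newly-established fact that every arrow of $A$ occurs in $D$ or $D^{-1}$. Let $E$ be an arbitrary band of $A$; after a cyclic permutation we may write $E = \ga u \del^{-1}$ for arrows $\ga, \del$ with common target. First I would observe that $E$ is realized in $M$: since $M$ is indecomposable pure injective and nonzero, and the finite-dimensional band module $M(E,\lam,1)$ exists, one needs to know that some nonzero element of $M$ lies in $\ga M \cap \del M$. This is where one uses that every arrow occurs in $D$; I would argue that a homogeneous element of $M$ with string ${}^{\fty}D.D^{\fty}$ is divisible (to one side) by any prefix/suffix of $D^{\fty}$, and since every letter of $E$ is a letter of $D^{\pm 1}$, the relevant divisibilities propagate. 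In any case, the minimality of $D$ in the bridge quiver $\mB$ among bands realized in $M$ then forces $D \preccurlyeq E$ to fail strictly, i.e.\ no band strictly below $D$ is realized, so $E \not\prec D$.

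Next I would feed this into Fact~\ref{band}. The element $m$ from Lemma~\ref{D} has $w(m) = {}^{\fty}D.D^{\fty}$, so the letter $\be^{-1}$ (the last letter of $D$) occurs in the left string of $m$, and the first letter $\al$ of $D$ occurs in the right string. If $E$ shares a subword of the form $\al'^{-1}\be'$ with $D$ then by Fact~\ref{band}(i), $E$ is a cyclic permutation of $D$ (or of $D^{-1}$) and we are done. So the real content is to rule out the possibility that $E$ and $D$ (and their inverses) share no ``turning'' subword $\al'^{-1}\be'$, while every single letter of $E$ still occurs somewhere in $D^{\pm1}$. I would analyse how $E$ sits against $D^{\fty}$: writing down the walk of $E$ through the quiver and comparing letter-by-letter with the bi-infinite periodic walk $^{\fty}D^{\fty}$, the point is that at every vertex of the quiver the two outgoing (resp.\ ingoing) arrows are already both used up by $D^{\pm1}$ because of the string-algebra axioms (at most two arrows in/out, at most one nonzero composition), so $E$ is forced to traverse the same arrows in the same local pattern as $D^{\fty}$, hence $E$ really is a cyclic power of $D$ — and being primitive, a cyclic permutation of $D$ (or of $D^{-1}$).

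The cleanest route is probably via Fact~\ref{band}(ii) and (iii): pick the cyclic permutation of $E$ (or $E^{-1}$) that begins with $\al$; since $D$ also begins with $\al$, part~(ii) gives $E = D$ outright, provided we can arrange the starting arrow to be $\al$. To arrange this I would argue that some cyclic permutation of $E$ or $E^{-1}$ must begin with $\al$: indeed $\al$ occurs in $E^{\pm1}$ (every arrow does), and $\al$ has the property of being the unique direct arrow in its $H_{\pm1}$-class, so once it occurs it occurs as a genuine ``start'' in the appropriate rotation. The main obstacle I expect is exactly this bookkeeping step — ensuring that the occurrence of each arrow of $E$ inside $D^{\pm1}$ can be upgraded from ``appears as a letter'' to ``appears in the same local branching pattern'', so that Fact~\ref{band}(ii) or (iii) applies. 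Once that is in hand, primitivity of $E$ rules out its being a proper power, and we conclude $E$ is a cyclic permutation of $D$ or $D^{-1}$; since these are essentially the only bands, $A$ is 1-domestic.
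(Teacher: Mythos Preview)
Your proposal has a concrete gap and also some irrelevant detours. The realization-in-$M$ and bridge-quiver-minimality discussion in your first paragraph is not needed here at all: Lemma~\ref{ga} has already been used to factor out every arrow not appearing in $D$ or $D^{-1}$, so at this point the statement is purely combinatorial and no longer refers to $M$. Nothing about $E$ being realized in $M$ is required.

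The real error is in your ``cleanest route''. You write that $\al$ occurs in $E^{\pm 1}$ because ``every arrow does'', but this is backwards: what we know (and all we know) is that every arrow of $A$ occurs in $D^{\pm 1}$, not that every arrow occurs in $E^{\pm 1}$. There is no reason the particular arrow $\al$ with which $D$ begins should appear in an arbitrary band $E$, so you cannot rotate $E$ to start with $\al$ and then invoke Fact~\ref{band}(ii). The paper's proof runs the argument in the correct direction: write $E = \eps\dots\pi^{-1}$; then $\eps$ and $\pi$, being arrows of $A$, do occur in $D^{\pm 1}$. One then needs a non-trivial combinatorial fact, namely \cite[Prop.~5.3]{Pun07}, to conclude that some cyclic permutation of $D$ or $D^{-1}$ is a band $F = \eps\dots\pi^{-1}$ (merely knowing $\eps$ appears as a letter is not enough: the rotation starting at that occurrence need not end with an inverse arrow, hence need not be a band in the normalised sense). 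Once $F$ is in hand, domesticity via Fact~\ref{band}(ii) gives $F = E$. Your sketch gestures at this local-branching analysis but does not supply it; that is precisely the content of the cited external result.
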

\begin{proof}
Suppose that $E= \eps\dots \pi^{-1}$. By assumption $D$ contains $\eps$ or $\eps^{-1}$, and also $D$ contains $\pi$ or
$\pi^{-1}$. It follows from \cite[Prop. 5.3]{Pun07} that either $D$ or $D^{-1}$ is a cyclic permutation of some band
$F= \eps \dots \pi^{-1}$. Since $A$ is domestic, we conclude that $F= E$.
\end{proof}

Thus we may assume that $A$ is 1-domestic and therefore, in view of \cite[Thm. 9.1]{Pun14}, $M$ is on Ringel's list.
This completes the proof of Theorem \ref{main}.

Recall that a (nonzero) module $M$ is said to be \emph{superdecomposable} if $M$ contains no (nonzero) indecomposable
direct summand. It is known that many (conjecturally all - see \cite{Pun04} for a precise statement) non-domestic
string algebras posses a superdecomposable pure injective module. This never happens for domestic string algebras, as
the following result shows.

\begin{theorem}\label{sup}
Every pure injective module over a domestic string algebra contains an indecomposable direct summand.
\end{theorem}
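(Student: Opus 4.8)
The plan is to deduce this from Theorem~\ref{main} and the apparatus developed in Sections~\ref{S-hom} and~\ref{S-main}, distinguishing cases according to whether $M$ has a homogeneous element with non-periodic string. Let $M\neq 0$ be a pure injective $A$-module. By Corollary~\ref{hom-dom} it contains a homogeneous element $m$; write $w=w(m)=u^{-1}.v$. If $w$ is $1$-sided (in particular, finite) or is $2$-sided but not periodic, then Fact~\ref{har} yields a direct summand $N_w$ of $M$; since $A$ is domestic, $N_w\cong M_w$ is on Ringel's list, hence indecomposable, and we are done. So, exactly as at the start of the proof of Theorem~\ref{main}, we may assume from now on that $M$ is $2$-sided and that \emph{every} homogeneous element of $M$ has a $2$-sided periodic string.

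Arguing as in that proof, $m$ then has string ${}^{\fty}D.D^{\fty}$ for a primitive cycle $D$, so $D$ is realized in $M$; I would choose $D=\al\dots\be^{-1}$ to be minimal, in the bridge quiver $\mB$, among the finitely many bands realized in $M$, with $\al\in H_1$ and $\be\in H_{-1}$. By Lemma~\ref{D} every nonzero element of $\be M$ is then homogeneous with string ${}^{\fty}D.D^{\fty}$, and $0\neq\be M\seq\soc M$. It now suffices to exhibit, as a direct summand of $M$, one of the indecomposable pure injective band modules attached to $D$ --- a finite dimensional $M(D,\lam,1)$, a Pr\"ufer, an adic, or the generic module. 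I would obtain this by transporting the situation to a $1$-domestic algebra, as in Lemmas~\ref{ga} and~\ref{1-dom}: prove that $\ga M=0$ for every arrow $\ga$ not occurring in $D$ or $D^{-1}$, so that $M$ is a module over the quotient $\bar A$ obtained by killing those arrows, note (as in Lemma~\ref{1-dom}) that $\bar A$ is $1$-domestic, and then invoke \cite[Thm.~9.1]{Pun14}, whose proof also shows that $1$-domestic string algebras have no superdecomposable pure injective modules. (Alternatively, for this last input one can try to use that $K=\bar K$: a ``division by $D$'' operation on the pp-definable group $\be M$, each nonzero element of which, by Lemma~\ref{D}, is divisible by all powers of $D$ on both sides, should --- after passing to a suitable pure-injective localization --- either admit an eigenvector, realizing in $M$ the pp-type of a generator of some $M(D,\lam,1)$, or, in the limit, realize the pp-type of a generator of the generic, Pr\"ufer, or adic module; pure injectivity and indecomposability of these modules, together with the homogeneity forced by Lemma~\ref{D}, would split the resulting realization off $M$.)

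The step I expect to be the main obstacle is the analogue of Lemma~\ref{ga}. Its proof in Section~\ref{S-main} used the indecomposability of $M$ --- through the existence, \cite[4.3.72]{Preb2}, of a non-trivial pp-link between any two nonzero elements of an indecomposable pure injective module --- and that is precisely the hypothesis one no longer has. To circumvent this I would argue that, if $\ga M\neq 0$ for some arrow $\ga$ outside $D\cup D^{-1}$, then the elements witnessing this can be confined to a proper pure-injective direct summand of $M$ which is again $2$-sided with only periodic homogeneous strings; since the bridge quiver $\mB$ is finite (there are only finitely many bands) and, by Fact~\ref{non-dense}, no chain $\wh H_i$ contains a subchain isomorphic to $\Q$, such a descent must terminate and produce an indecomposable direct summand. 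In other words, it is the combinatorial finiteness of domestic string algebras that should take over the role played by indecomposability in the proof of Theorem~\ref{main}.
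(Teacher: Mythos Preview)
Your outline matches the paper's proof up to the point you correctly flag as the obstacle: dropping the indecomposability hypothesis needed for Lemma~\ref{ga}. But your proposed workaround --- a descent through proper direct summands governed by the finiteness of $\mB$ --- is where the proposal has a genuine gap. You do not explain how to ``confine'' the offending elements of $\ga M$ to a proper direct summand, why that summand would again be $2$-sided with only periodic homogeneous strings, or why repeated passage to such summands would terminate in something indecomposable; the finiteness of $\mB$ and Fact~\ref{non-dense} control strings, not chains of direct summands of $M$, so the termination claim is unsupported.

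The paper avoids this entirely with a one-line device you are missing: replace $M$ by the \emph{hull} $M'$ of $m$ in $M$ (see \cite[\S 4.3.5]{Preb2}). This $M'$ is already a direct summand of $M$, and by \cite[4.3.74]{Preb2} every nonzero element of $M'$ is pp-linked to $m$ --- which is precisely the single consequence of indecomposability used in the proof of Lemma~\ref{ga}. Hence the proof of Lemma~\ref{ga} goes through verbatim for $M'$, giving $\ga M'=0$ for every arrow $\ga$ not in $D$ or $D^{-1}$; then Lemma~\ref{1-dom} makes $M'$ a module over a $1$-domestic string algebra. The final input is not \cite[Thm.~9.1]{Pun14} but \cite[Cor.~6.7]{Pun12}, which states directly that $1$-domestic string algebras have no superdecomposable pure injective modules; hence $M'$, and therefore $M$, has an indecomposable direct summand. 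No descent is needed.
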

\begin{proof}
Let $M$ be a pure injective $A$-module. By Corollary \ref{hom-dom} $M$ contains a homogeneous element $m$. If the
string $w(m)$ is not periodic, it follows from Fact \ref{har} that the indecomposable pure injective module $M_w$
from Ringel's list is a direct summand of $M$.

Thus we may assume that each homogeneous element in $M$ has a periodic string. As in the proof of Theorem \ref{main}
choose a homogeneous element $m\in M$ whose string $w(m)= {}^{\fty} D. D^{\fty}$ is minimal. Let $M'$ be the hull of
$m$ in $M$: this is a direct summand of $M$ which contains $m$ and is minimal such (see, e.g., \cite[\S 4.3.5]{Preb2});
it has the property (see \cite[4.3.74]{Preb2}) that every nonzero element of $M'$ is related to $m$ by a pp formula,
as at the start of the proof of Lemma \ref{ga} (this is the only point in that proof where we used indecomposability
of $M$). Then, just as above, we obtain that $M'$ is a module over a 1-domestic algebra.  But, by
\cite[Cor. 6.7]{Pun12}, there is no superdecomposable pure injective module over any 1-domestic string algebra so
$M'$, and hence $M$, has an indecomposable direct summand.
\end{proof}

\section{Reproving Harland's theorem for domestic string algebras}\label{S-repr}

In this section we develop a different approach to the proof of Fact \ref{har} that we need from Harland.

\begin{theorem} (see \cite[Thm. 51]{Har})\label{unique}
Suppose that $A$ is a domestic string algebra.  Let $w= u^{-1}.v$ be a 2-sided non-periodic string. Then $M_w$
is the unique indecomposable pure injective module which contains a homogeneous element $m$ with string
$w(m)= u(m)^{-1}.v(m)$ equal to $w$.
\end{theorem}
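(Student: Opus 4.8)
The plan is to prove existence and uniqueness separately, and to obtain the "direct summand" conclusion of Fact~\ref{har} as a by-product. For \emph{existence}, I would build $M_w$ explicitly as Ringel's module attached to the string $w=u^{-1}.v$: take a one-dimensional space at each node of $w$, form the product of these spaces, and pass to the appropriate subspace/quotient according to whether each end is expanding or contracting (so one gets a direct-sum end on each contracting side and a direct-product end on each expanding side). One then checks that this module is pure injective (it is a pp-definable subgroup of a product of finite-dimensional string modules $M(G)$ as $G$ runs over the finite substrings of $w$, cut off suitably), that it is indecomposable (the endomorphism ring is local — here one uses the shift endomorphisms and that $w$ is \emph{non-periodic}, so there is an essentially unique ``middle''), and that the canonical basis element $m$ placed between $u^{-1}$ and $v$ has $w(m)=u^{-1}.v$ and is homogeneous. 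The homogeneity of $m$ is exactly the content cited from \cite[L.~156]{Har}; in the domestic case I would give a direct argument: if $m=x+(m-x)$ with $u(x)>u$ and $v(m-x)>v$, then by Lemma~\ref{triang} $v(x)=v$ and $u(m-x)=u$, and tracking $x$ along $v$ using Lemma~\ref{div} (divide $m$, hence $x$, by the finite presubstrings $D_k$ of $v$) forces $x$ to acquire, on the left, a string strictly above $u$ while still realizing arbitrarily long presubstrings of $v$ on the right — contradicting that $u^{-1}v$ is a string, via Fact~\ref{band}.

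For \emph{uniqueness} (and the direct-summand statement), suppose $M$ is any indecomposable pure injective module with a homogeneous element $m$ such that $w(m)=u^{-1}.v$. Let $M'$ be the hull of $m$ in $M$ (see \cite[\S4.3.5]{Preb2}); it is an indecomposable pure-injective direct summand of $M$ in which every nonzero element is linked to $m$ by a pp formula \cite[4.3.74]{Preb2}. I would show $M'\cong M_w$. One direction is to produce a pure embedding $M_w\hookrightarrow M'$: realize the defining pp-pair of $(M_w,m)$ — more precisely, for each finite presubstring $C$ of $v$ (resp.\ of $u^{-1}$) the formula $(.C)$ (resp.\ $(C^{-1}.)$) holds at $m$ in $M'$, so there is a compatible system of maps from the finite string modules $M(C^{-1}.C')$ into $M'$ sending the marked basis element to $m$; by pure injectivity these assemble, on the limit, to a map $M_w\to M'$ carrying $m$ to $m$, and homogeneity of the canonical basis of $M_w$ together with Lemma~\ref{div} shows this map is injective and pure. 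Since $M_w$ is pure injective, its image splits off $M'$; since $M'$ is indecomposable, $M'=M_w$. For the opposite containment one argues that $M'$ is also a ``string-type'' module over the string $w$: this is where the non-periodicity is essential, because it guarantees that the two shift endomorphisms of $M_w$ generate (up to the radical) all of $\End(M_w)$, so there is no room for $M'$ to be strictly larger.

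The main obstacle I expect is the pure-exactness / indecomposability bookkeeping on the \emph{expanding} (direct-product) ends: there one is taking an inverse limit of the finite string modules and must check that the resulting map $M_w\to M'$ is pure and that the image is the whole hull, which requires showing that every element of $M'$ linked to $m$ by a pp formula already lies in (the image of) $M_w$ — equivalently, that no pp formula can ``reach past'' the string $w$. Concretely this reduces to a finiteness statement about the chains $\wh H_{\pm1}$: by Fact~\ref{non-dense} these chains contain no copy of $\mathbb{Q}$, so the suprema $v(m)$, $u(m)$ are genuinely attained along the cofinal sequences $D_k$ of presubstrings, and one can run the kind of pure-injective compactness argument used in Proposition~\ref{dense} to pin down the element exactly. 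Handling the mixed case (one end expanding, the other contracting) just means running the direct-sum argument on one side and the direct-product argument on the other; with the above in hand these combine without new ideas.
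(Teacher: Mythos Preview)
Your proposal takes a route quite different from the paper's, and the central step has a genuine gap.

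The paper does not try to construct a map $M_w\to M$ and prove it is a pure embedding. Instead it fixes the pp-type $p$ of the homogeneous element $m$, chooses a specific pp-pair $\psi<\phi$ with $\phi=(C^{-1}.D)\in p$ and $\psi=(E'.D)+(C^{-1}.F')\notin p$, and then proves, by analysing free realizations of an arbitrary $\chi\in[\psi,\phi]$ as pointed string or band modules and decomposing via simple string maps, that every such $\chi$ is either implied by some $(C_1^{-1}.D_1)$ with $C_1\leq u$, $D_1\leq v$ (hence in $p^+$) or implies some $(E_2^{-1}.D)+(C^{-1}.F_2)$ with $E_2>u$, $F_2>v$ (hence in $p^-$). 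Thus $p$ restricted to $[\psi,\phi]$ is completely determined by $w$ alone, and Ziegler's criterion \cite[L.~7.10]{Zie} gives uniqueness of the hull.

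Your argument needs exactly this fact and does not supply it. To show that your map $M_w\to M'$ is pure you must know that $m$ satisfies no pp-formula in $M'$ beyond those satisfied by the canonical element $m_0$ in $M_w$. Homogeneity of $m$ only tells you that $m$ fails each formula $(E_2^{-1}.D)+(C^{-1}.F_2)$ with $E_2>u$, $F_2>v$; it says nothing directly about an arbitrary pp-formula $\chi$. The substantive content of the theorem is precisely that every $\chi$ in the relevant interval \emph{reduces} to one of these special shapes, and this requires the combinatorial analysis of free realizations (the case split into band modules, then string modules with one or two marked basis elements, then the orientation/overlap argument using Fact~\ref{band}). Your sentence ``homogeneity of the canonical basis of $M_w$ together with Lemma~\ref{div} shows this map is injective and pure'' is where the proof actually lives, and nothing in Lemma~\ref{div} or Fact~\ref{non-dense} gives it to you. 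Similarly, the ``opposite containment'' paragraph (shift endomorphisms generating $\End(M_w)$ up to the radical) is not established and, even if true, does not show that the hull of $m$ cannot be strictly larger; that again comes down to determining the full pp-type of $m$, which is the missing step.
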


The proof given in \cite{Har} works for arbitrary string algebras. Our approach will give the result just for domestic
string algebras.

\begin{proof}
Suppose that $M$ is an indecomposable pure injective module containing a homogeneous element $m$ with $w(m)=u^{-1}.v$
and let $p$ be the pp type of $m$ in $M$.  Write $p^-$ for the set of pp formulae not in $p$ and, for emphasis, $p^+$ for the set of those which are in $p$. Using Lemma \ref{div}, if necessary, we may assume that $m$ is in the socle
of $M$.

We will choose a pair of pp formulae $\psi< \phi$ with $\phi\in p$, $\psi\notin p$ and show that the restriction
of $p$ to the interval $[\psi,\phi]$ is independent of $M$ and $m$. By Ziegler \cite[L. 7.10]{Zie} it will follow that the
isomorphism type of $M$ is uniquely determined, and hence $M$ is isomorphic to $M_w$.

Since $A$ is domestic it follows that $u^{-1}= {}^{\fty} E u'$ and $v= v' F^{\fty}$, where $E= \al \dots \be^{-1}$
and $F= \eps\dots \pi^{-1}$ are bands which are not equal up to cyclic permutation.  We
set $C^{-1}= E u'$, $D= v'F$, so $\phi= (C^{-1}.D)\in p$. Also write $C^{-1}= \al E'$
and $D= F'\pi^{-1}$ and consider the formula $\psi= (E'.D)+ (C^{-1}.F')\in p^-$. Since $m$ is in the socle of $M$,
it follows that $C$ and $D$ begin with direct arrows.

By the construction of $u(m)$ and $v(m)$, the element $m$ will satisfy each formula $(C_1^{-1}.D_1)$, where
$C_1\in H_{-1}$ is such that $C_1\leq u$ and $D_1\in H_1$ is such that $D_1\leq v$. Furthermore, the set of such formulae
in $p$ is clearly closed with respect to finite conjunctions.

Furthermore, because $m\in M$ is homogeneous, $m$ satisfies no formula $(E_2^{-1}.D)+ (C^{-1}.F_2)$ for $E_2>u$ in $H_{-1}$
and $F_2>v$ in $H_1$, and the set of such formulae in $p^-$ is closed with respect to finite sums.

Each formula in the interval $[\psi, \phi]$, except $\phi$, is obtained as follows: take any formula $\chi$ strictly
below $\phi$ and add it to $\psi$. We will show that exactly one of the following holds.

\vspace{1mm}

\noindent 1) $\chi$ is implied by a formula $(C_1^{-1}.D_1)\in p$ as above, and hence
$\chi+ \psi$ must be in $p^+$;

or

\noindent 2) $\chi$ implies a formula $(E_2^{-1}.D)+ (C^{-1}.F_2)\in p^-$, therefore $\chi+ \psi$ implies a formula of similar shape and hence must be in $p^-$.

\vspace{1mm}

Note that this is independent of $M$.

Fix $\phi$ and $\psi$ as above. Denote by $n\in M(C^{-1}.D)$ the element in a standard basis of $M(C^{-1}.D)$
between $C^{-1}$ and $D$; so $n\in M(C^{-1}.D)$ is a free realization of $\phi$.

Choosing a formula $\chi$ strictly below $\phi$ is equivalent to picking a morphism $f$ from $(M(C^{-1}.D), n)$ to a
finite dimensional pointed module $(L,l)$ (such that $l\in L$ is a free realization of $\chi$) such that $f$ is not
a split embedding. Because the sets of formulae in 2) are closed with respect to finite sums, we may assume that $L$ is indecomposable, hence either a string or a band module.

If $L$ is a band module then, from the description of morphisms between string and band modules, it follows, as in
the proof of Lemma \ref{ga}, that, in applying $f$, the string $C^{-1}D$ is shortened at one or both ends and hence $l\in L$
satisfies $\psi$. Then $\chi$ implies $\psi$, hence we are in case 2) and $\chi \in p^-$.

Therefore we may assume that $L$ is a string module.  Write $l$ as a linear combination of basis elements $l_i$ in
$L$, each $l_i$ corresponding to a simple string map from $M(C^{-1}.D)$ to $L$. Since $C$ and $D$ start with direct
arrows each $l_i$ lies in the socle of $L$.

As in the case that $L$ is a band module, any $l_i$ arising from a simple string map which first involves a proper
factorization of $C^{-1}.D$ on one (or both) ends is a free realization of a pp formula below $\psi$, so we can
ignore these elements for the purpose of deciding whether or not $\chi \in p$. Thus we may assume that the $H_{-1}$
string, $C_i$, of each $l_i$ in $L$ is an extension of $C$ (i.e.~$C$ is a presubstring of $C_i$); and the $H_1$
string, $D_i$, of each $l_i$ in $L$ is an extension of $D$.

If $l_i$ and $l_j$ (or rather their strings) for $i\neq j$ are embedded in $L$ with the same orientation (say, with $H_{-1}$ to
the left) then we obtain a contradiction as follows.  We can suppose, without loss of generality, that $l_i$ lies
`to the left' of $l_j$:

$$
\vcenter{%
\xymatrix@C=12pt@R=10pt{%
&*+={\circ}\ar[dl]_{\al}&&&&&*+={\circ}\ar[dr]_(.3){\be}&&*+={\circ}\ar[dl]_{\al}&\\
*+={\circ}\ar@{-}@/_.7pc/[rrr]_{C^{-1}}&&&*+={\circ}\ar@{}+<0pt,-10pt>*{_{l_i}}\ar@{--}@/_.9pc/[rrrr]&&&&
*+={\circ}\ar@{-}@/_.7pc/[rrr]_{C^{-1}}&&&
*+={\circ}\ar@{}+<0pt,-10pt>*{_{l_j}}&
}}
$$

\vspace{2mm}

The string between two occurrences of $\al$ on this diagram, by Lemma \ref{band} (iii), is a power of $E$. But that is
impossible by the non-periodicity of $C^{-1}.D$.

If there is just one $l_i$ (that is, $l$ is a standard basis element), then the formula $\chi$ is equivalent to a
formula of the form $(G^{-1}.H)$ generating the pp type of $l_i$ in $L$. If $G\leq u$ and $H\leq v$, then this formula
is in $p^+$, hence we are in case 1) and $\chi \in p^+$. Otherwise, say $G> u$, therefore $(G^{-1}.H)$, hence $\chi$, already is in
$p^-$.

Thus we have reduced to the case that $l= l_1+ l_2$, with $l_1$ embedded in $L$ as $C_1^{-1}.D_1$ (with left to right
orientation), and $l_2$ embedded somewhere to the right of $l_1$ with inverse orientation $D_2^{-1}.C_2$ of strings
(the case with $l_1$ to the right of $l_2$ is treated just like this one).

Because $C$ is a presubstring of $C_1$ and $C_2$, we have $C\leq C_1, C_2$ in $H_{-1}$ and similarly $D\leq D_1, D_2$
in $H_1$.

We claim that we can assume that $C_1\leq u$ and $D_1\leq v$ (or a similar assertion for $C_2$ and $D_2$). Otherwise,
by symmetry we may suppose that $D_1> v$. If $D_2> v$ then, by Lemma \ref{triang}, $l$ is divisible by
$\min(D_1,D_2)> v$ and hence (since $l$ freely realizes $\chi$ in $L$) $\chi$ implies divisibility by $\min(D_1,D_2)$,
therefore we are in case 2) and $\chi\in p^-$.

Thus we may assume that $D_2\leq v< D_1$. We may further suppose that the strings $C_1^{-1}.D_1$ and $C_2^{-1}.D_2$
are incomparable. For otherwise the pp type of $l_2$ in $L$ is strictly less than the pp type of $l_1$, therefore we
can get rid of $l_1$ without affecting $\chi$ (the pp types of $l_2$ and $l_1+l_2$ will be equal).

It follows that $C_1< C_2$. If $C_2> u$ then we have that $(C_1.D_1)+(C_2.D_2) \in p^-$ so $\chi\in p^-$.

Otherwise $C_1< C_2\leq u$, hence we have obtained the desired conclusion for $C_2$ and $D_2$, proving the claim.

Thus we may assume that $C\leq C_1\leq u$ and $D\leq D_1\leq v$. Now, $D$ is embedded as the start of $D_1$ on the
right of $l_1$ and $D^{-1}$ is embedded as the start of $D_2^{-1}$ on the left of $l_2$; these two copies of $D$
cannot overlap each other (or even touch), otherwise we would obtain a configuration $\tau \tau^{-1}$ for some
arrow $\tau$ or its inverse, which is not possible.

Thus we have obtained the following configuration in $L$:

$$
\vcenter{%
\xymatrix@R=12pt@C=10pt{%
&&&&*+={\circ}\ar[dl]_{\eps}\ar@{--}[r]&*+={\circ}\ar[dr]^{\pi}&&*+={\circ}\ar[dl]^{\eps}\ar@{--}[r]&
*+={\circ}\ar[dr]_{\eps}&&*+={\circ}\ar[dl]_{\pi}\ar@{--}[r]&*+={\circ}\ar[dr]^{\eps}&&\\
*+={\circ}\ar@{-}@/_1pc/_{C_1^{-1}}[rr]&&*+={\circ}\ar@{}+<3pt,-10pt>*{_{l_1}}\ar@{-}@/_1pc/_{D}[rrrr]&
*+={\circ}&&&*+={\circ}\ar@{-}@/_1pc/_{t}[rrr]&&&*+={\circ}&&&
*+={\bullet}&*+={\circ}\ar@{}+<1pt,-10pt>*{_{l_2}}\ar@{-}@/^1pc/^{D^{-1}}[llll]&&
*+={\circ}\ar@{-}@/^1pc/^{C_2}[ll]
}
}
$$

\vspace{2mm}

We know that $D_1= DtD^{-1}C_2$ is less or equal to $v$. Note also that $v= D F^{\fty}$ cannot coincide with $D_1$ for
the whole length of the latter - for instance $v$ cannot include $\eps^{-1}\pi$ at the right hand end of $t$ on the
above diagram, for otherwise $\eps^{-1}\pi$ would be a substring of $F$, which contradicts Fact \ref{band}(i).

It follows that $D_1$ is different from $v$ (and hence less than $v$) somewhere before the first letter $\pi$ of
$D^{-1}$.

Consider the string module $L'= M(C_1^{-1}.D')$ which is obtained from $L$ by factoring out the (marked) image of the
rightmost $\eps$ in the diagram and everything to the right of that, and let $l'_1$ be the image of $l_1$ in this
factor module. From what we have said above it follows that $D'\leq v$ and therefore the formula $(C_1^{-1}.D')$ is
in $p^+$. Because (by the construction) this formula implies $\chi$, it follows that we are in case 1) and $\chi \in p^+$.
\end{proof}

It follows from the proof of this theorem that, if $M$ is any pure injective module with a homogeneous element $m$
with word $w$, then $M_w$ is a direct summand of $M$.  Thus the second part of Harland's theorem, Fact \ref{har},
has  now been justified and we have a self-contained proof of Theorem \ref{sup}.

We draw one more corollary from the proof of Theorem \ref{unique}.

\begin{cor}\label{basis}
Suppose that $w=u^{-1}v$ is a 2-sided non-periodic string over a domestic string algebra and $M_w$ is the corresponding
indecomposable pure injective module. A basis of open sets in the Ziegler topology for $M_w$ is given by the
pairs $(C^{-1}.D) \,\, / \,\, (E^{-1}.D)+ (C^{-1}.F)$, where $C\leq u< E$ in $\wh H_{-1}$ and $D\leq v< F$ in $\wh H_1$.
\end{cor}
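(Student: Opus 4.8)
The plan is to extract the basis of open sets directly from the interval analysis carried out in the proof of Theorem~\ref{unique}. Recall that Ziegler's criterion (see \cite[L.~7.10]{Zie}) identifies a point of the Ziegler spectrum with the equivalence class of its pp type, and that a basis of open sets for that point is given by the pairs $\phi/\psi$ with $\phi\in p$, $\psi\notin p$, and $\phi/\psi$ a \emph{minimal} pair for $p$, i.e.\ $p$ contains no formula strictly between $\psi$ and $\phi$. So the task is twofold: first, show that every pair $(C^{-1}.D)\,/\,(E^{-1}.D)+(C^{-1}.F)$ with $C\leq u<E$ and $D\leq v<F$ actually lies in $p$ (in the sense that $(C^{-1}.D)\in p^+$ and $(E^{-1}.D)+(C^{-1}.F)\in p^-$) and is a minimal pair; second, show that these pairs form a \emph{basis}, i.e.\ that every open neighbourhood of $M_w$ contains one of them.

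First I would verify membership. That $(C^{-1}.D)\in p^+$ for $C\leq u$, $D\leq v$ follows from the construction of $u(m)$ and $v(m)$ as suprema, exactly as noted in the proof of Theorem~\ref{unique}; and that $(E^{-1}.D)+(C^{-1}.F)\in p^-$ for $E>u$, $F>v$ follows from homogeneity of $m$, again as recorded there. For the general pair with $C\leq u<E$, $D\leq v<F$ one takes $C'=\max(C,D\text{-compatible prefix})$; more simply, since $(C^{-1}.D)$ with $C\le u$, $D\le v$ implies the weaker $(C_1^{-1}.D_1)$ for any smaller $C_1,D_1$, and a sum $(E^{-1}.D)+(C^{-1}.F)$ is implied by any sum $(E_2^{-1}.D_2)+(C_2^{-1}.F_2)$ with $E_2\ge E$ etc., the closure properties of $p^+$ under conjunction and of $p^-$ under sum (both invoked in the proof of Theorem~\ref{unique}) reduce the general case to the extreme one. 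So every such pair is a genuine $\phi/\psi$ with $\phi\in p$, $\psi\notin p$.

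Next I would invoke the interval dichotomy. The heart of the proof of Theorem~\ref{unique} is the statement that, for $\phi=(C_0^{-1}.D_0)$ and $\psi=(E_0'.D_0)+(C_0^{-1}.F_0')$ chosen there, \emph{every} formula $\chi$ strictly below $\phi$ satisfies exactly one of: (1) $\chi+\psi\in p^+$, or (2) $\chi+\psi\in p^-$. This says precisely that there is \emph{nothing} strictly between $\psi$ and $\phi$ inside $p$: any $\theta$ with $\psi<\theta<\phi$ is of the form $\chi+\psi$ for some $\chi<\phi$, and the dichotomy forces $\theta\in p^+$ or $\theta\in p^-$ with no middle ground contributing a new pp-type element --- more carefully, if $\theta\in p$ then $\theta$ is equivalent (mod $p$) to $\phi$, and if $\theta\notin p$ then $\theta$ is equivalent (mod $p$, in the lattice of pp-definable subgroups) to $\psi$. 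Hence $(\phi/\psi)$ is a minimal pair for $p$, so it defines a basic open set of the Ziegler spectrum containing $M_w$ and, by \cite[L.~7.10]{Zie}, $M_w$ is the unique point in it. The same argument applies verbatim with $C_0,D_0,E_0,F_0$ replaced by any $C\le u<E$, $D\le v<F$ (the proof of Theorem~\ref{unique} never used more than these inequalities, plus $E,F$ being the relevant band-prefixes, which one arranges by enlarging $E,F$ if necessary without shrinking the open set).

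Finally, to see these pairs form a basis I would argue that an arbitrary basic Ziegler-open set containing $M_w$ is of the form $(\phi'/\psi')$ with $\phi'\in p$, $\psi'\notin p$; since $\phi'\in p=\mathrm{pp}(m,M_w)$ and $p$ is generated (as noted) by the formulae $(C_1^{-1}.D_1)$ with $C_1\le u$, $D_1\le v$, some such $(C^{-1}.D)$ implies $\phi'$; dually, since $\psi'\notin p$ and the complement $p^-$ is, in the relevant range, generated under sums by the $(E_2^{-1}.D)+(C^{-1}.F_2)$ with $E_2>u$, $F_2>v$, the formula $\psi'$ is implied by some $(E^{-1}.D)+(C^{-1}.F)\notin p$ --- here one may need to first replace $\psi'$ by $\psi'+(C^{-1}.D)$ and $\phi'$ by $\phi'\wedge(C^{-1}.D)$, harmlessly shrinking the open set, to land in the interval $[\psi,\phi]$ where Theorem~\ref{unique}'s analysis bites. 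Then $(C^{-1}.D)/(E^{-1}.D)+(C^{-1}.F)\subseteq(\phi'/\psi')$ as open sets, which is exactly the basis property. The main obstacle is this last step: being careful about cofinality of the families $\{(C^{-1}.D):C\le u, D\le v\}$ in $p^+$ and $\{(E^{-1}.D)+(C^{-1}.F):E>u,F>v\}$ in the part of $p^-$ above $\psi$, which is where Fact~\ref{non-dense} (no copy of $\mathbb{Q}$ in $\wh H_i$) and the explicit structure of $p$ for $M_w$ coming from \cite[L.~111]{Har} and \cite[L.~156]{Har} are really used.
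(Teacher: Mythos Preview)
Your overall strategy—extract the neighborhood basis from the dichotomy established in the proof of Theorem~\ref{unique}—is exactly what the paper intends; it offers no separate argument, merely declaring this a corollary ``from the proof of Theorem~\ref{unique}''.  Steps~1 and~3 of your outline are correct in spirit.

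Step~2, however, is confused and should be dropped.  The dichotomy in the proof of Theorem~\ref{unique} does \emph{not} say that $(\phi/\psi)$ is a minimal pair for $p$: there are certainly formulas strictly between $\psi$ and $\phi$ which lie in $p$—for instance any $(C_1^{-1}.D_1)$ with $C<C_1\leq u$ and $D<D_1\leq v$.  The statement ``$\theta\in p^+$ or $\theta\in p^-$'' is a tautology, and your gloss ``equivalent mod $p$'' does not repair it.  What the dichotomy actually says is that every $\chi<\phi$ is \emph{either} implied by some $(C_1^{-1}.D_1)\in p^+$ \emph{or} implies some $(E_2^{-1}.D)+(C^{-1}.F_2)\in p^-$; this is a statement about how $p^+$ and $p^-$ are \emph{generated} on the interval, not about the interval collapsing to two classes.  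Neither minimality nor the claim that $M_w$ is the unique point of the open set is needed for the corollary (the latter is Theorem~\ref{unique} itself, not a consequence of anything here).

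The actual content is your step~3.  Given arbitrary $(\phi'/\psi')$ containing $M_w$, conjunct with $\phi_0=(C_0^{-1}.D_0)$ to bring everything below $\phi_0$; apply case~1) of the dichotomy to $\phi'\wedge\phi_0\in p$ to get $(C_1^{-1}.D_1)\Rightarrow\phi'$ with $C_1\leq u$, $D_1\leq v$; then apply case~2), now with $(C_1^{-1}.D_1)$ as the top formula, to $\psi'\wedge(C_1^{-1}.D_1)\notin p$ to get $\psi'\Rightarrow(E^{-1}.D_1)+(C_1^{-1}.F)$ with $E>u$, $F>v$.  The one point needing care is that the proof of Theorem~\ref{unique} must be rerun with the longer $C_1,D_1$ in place of $C_0,D_0$; it does go through, since the argument there uses only that $C^{-1}$ and $D$ are presubstrings of $u^{-1},v$ long enough to contain a full band period (so $C^{-1}=E^ku'$, $D=v'F^k$ for any $k\geq1$ works).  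No appeal to Fact~\ref{non-dense} or to further results from \cite{Har} is needed beyond what is already used in the proof of Theorem~\ref{unique}.
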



\begin{thebibliography}{99}


\bibitem{BarPre} S.~Baratella, M.~Prest, Pure--injective modules over the dihedral algebras,
Comm. Algebra, 25(1) (1997), 11--31.

\bibitem{B-P} K.~Burke, M.~Prest, The Ziegler and Zariski spectra of some domestic string algebras,
Algebras Repr. Theory, 5 (2002), 211--234.

\bibitem{B-R} M.C.R.~Butler, C.M.~Ringel, Auslander--Reiten sequences with few middle terms and
applications to string algebras, Comm. Algebra, 15 (1987), 145--179.

\bibitem{CB}  W.~Crawley-Boevey,  Infinite-dimensional modules in the representation theory of finite-dimensional
algebras, pp. 29--54 in: I.~Reiten, S.~Smal\o\ and \O.~Solberg (Eds.), Algebras and Modules I, Canadian Math.
Soc. Conf. Proc., Vol. 23, Amer. Math. Soc, 1998.

\bibitem{Har} R.J.~Harland, Pure injective modules over tubular algebras and string algebras, PhD Thesis,
University of Manchester, 2011.

\bibitem{Kra91} H.~Krause, Maps between tree and band modules, J. Algebra, 137 (1991), 186--194.

\bibitem{Pre98} M.~Prest, Ziegler spectra of tame hereditary algebras, J. Algebra, 207 (1998), 146--164.

\bibitem{Preb2} M.~Prest, Purity, Spectra and Localization, Encyclopedia of Mathematics and its
Applications, Vol.~121, Cambridge University Press, 2009.

\bibitem{P-P04} M.~Prest, G.~Puninski, One-directed indecomposable pure injective modules over string algebras,
Colloq. Math., 101 (2004), 89--112.

\bibitem{Pun04} G.~Puninski, Super decomposable pure injective modules exist over some string algebras,
Proc. Amer. Math. Soc., 132 (2004), 1891--1898.

\bibitem{Pun07} G.~Puninski, Band combinatorics of domestic string algebras, Colloq. Math., 108 (2007), 285--296.

\bibitem{Pun12} G.~Puninski, Krull--Gabriel dimension and Cantor--Bendixson rank of 1-domestic string algebras,
Colloq. Math., 127 (2012), 185--210.

\bibitem{Pun14} G.~Puninski, Pure injective indecomposable modules over 1-domestic string algebras,
Algebras Repres. Theory, 17 (2014), 643--673.


\bibitem{Rin95} C.M.~Ringel, Some algebraically compact modules. I, pp.~419--439 in: Abelian Groups and
Modules, eds. A.~Facchini and C.~Menini, Kluwer, 1995.

\bibitem{Rin98} C.M.~Ringel, The Ziegler spectrum of a tame hereditary algebra, Colloq. Math.,
76 (1998), 105--115.

\bibitem{Rin00} C.M.~Ringel, Infinite length modules. Some examples as introduction, pp.~1--73 in:
Infinite Length Modules, eds. H.~Krause and C.M.~Ringel, Birh\"auser, 2000.

\bibitem{Sch97} J.~Schr\"oer, Hammocks for string algebras, PhD Thesis, Bielefeld, SFB 343 E97--010, 1997.

\bibitem{Zie}  M.~Ziegler,  Model theory of modules,  Ann. Pure Appl. Logic, 26(2) (1984), 149--213.

\end{thebibliography}
\end{document}